\DeclareDocumentCommand \addpic{O{0.4\textwidth} m g}{\parpic[r]{%
\begin{minipage}{#1}
    \includegraphics[width=\textwidth]{#2}%
    \IfNoValueTF{#3}{}{\captionof{figure}{\footnotesize #3}}
\end{minipage}
}}
\newtheorem{theo}{Theorem}
\newtheorem{lem} [theo]{Lemma}
\newtheorem{prop}[theo]{Proposition}
\newtheorem{conj}[theo]{Conjecture}
\makeatletter \@addtoreset{equation}{section}
\renewcommand{\thetheo}{\arabic{theo}}
\def\CT{\mathop{\mathop{CT}}}
\def\CT{\mathop{\mathrm{CT}}}
\newcommand\qbinom[2]{\left[#1 \atop #2  \right]_q}
\def\N{\mathbb{N}}
\newcommand{\area}{\operatorname{area}}
\newcommand{\coarea}{\operatorname{coarea}}
\newcommand{\dinv}{\operatorname{dinv}}
\newcommand{\bounce}{\operatorname{bounce}}
\def\N{\mathbb{N}}
\def\PT{\mathop{\textrm{PT}}}
\def\PTp{\mathop{\textrm{PT}'}}
\def\CT{\mathop{\textrm{CT}}}
\def\NT{\mathop{\textrm{NT}}}
\title{On Parity Unimodality of $q$-Catalan Polynomials}
\author{Guoce Xin$^*$ and Yueming Zhong }
\address{ $^1$School of Mathematical Sciences, Capital Normal University,
Beijing 100048, PR China \\
$^2$School of Mathematical Sciences, Capital Normal University,
Beijing 100048, PR China}
\email{$^*$\texttt{guoce.xin@gmail.com}\ \  \&\small   \texttt{zhongyueming107@gmail.com}}
\date{June 25, 2019} 
\begin{document}

\begin{abstract}\
A polynomial $A(q)=\sum_{i=0}^n a_iq^i$ is said to be unimodal if $a_0\le a_1\le \cdots \le a_k\ge a_{k+1} \ge \cdots \ge a_n$.
We investigate the unimodality of rational $q$-Catalan polynomials, which is defined to be $C_{m,n}(q)= \frac{1}{[n+m]} \qbinom{m+n}{n}$
for a coprime pair of positive integers $(m,n)$. We conjecture that
they are unimodal with respect to parity, or equivalently, $(1+q)C_{m+n}(q)$ is unimodal. By using generating functions and the constant term method,
we verify our conjecture for $m\le 5$ in a straightforward way.
\end{abstract}

\maketitle
{\small Mathematics Subject Classifications: 05A15, 05A20, 05E05}

{\small \textbf{Keywords}: rational Dyck paths; rational $q$-Catalan polynomials; unimodal sequences.}

\section{Introduction}\label{sec-1-intro}
We will consider the unimodality of some symmetric polynomials.
A sequence $a_0,a_1,\dots, a_n$ is said to be symmetric if $a_i=a_{n-i}$ for all $i$.
It is said to be unimodal if there is a $k$ such that $a_0\le a_1\le \cdots \le a_k\ge a_{k+1} \ge \cdots \ge a_n$.
It is said to be unimodal with respect to parity if $a_0,a_2,\dots $ and $a_1,a_3,\dots $ are both unimodal.
A polynomial $P(q)=a_0+a_1q+\cdots +a_n q^n$ of degree $n$ is said to be symmetric (resp., unimodal) if its coefficient sequence $a_0,a_1,\dots, a_n$ is symmetric (resp., unimodal).

Stanley gave a nice survey \cite{ref-8} on various methods for showing that a sequence is unimodal (or log-concave which we will not discuss here).
A classical example is the following.
\begin{theo}\label{G-unimodal}
  The Gaussian polynomial $G_{m,n}(q)$ is symmetric and unimodal, where
  $$ G_{m,n}(q)= \qbinom{m+n}{m} = \frac{[m+n]!}{[m]![n]!},$$
with the classical $q$-notation $[n]=\frac{1-q^n}{1-q},\ [n]!=[n][n-1]\cdots [1]$.
\end{theo}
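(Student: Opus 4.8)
The plan is to treat the two claims separately, with symmetry routine and unimodality the substantive part. For symmetry I would realize $G_{m,n}(q)$ combinatorially as the rank-generating function of the lattice $L(m,n)$ of partitions $\lambda=(\lambda_1\ge\cdots\ge\lambda_m)$ fitting inside an $m\times n$ rectangle, so that $G_{m,n}(q)=\sum_{\lambda\subseteq m\times n}q^{|\lambda|}$ and $\deg G_{m,n}=mn$. The map sending $\lambda$ to its complement $\lambda^c$ inside the rectangle is an involution carrying $|\lambda|$ to $mn-|\lambda|$, which immediately gives $a_i=a_{mn-i}$. Equivalently, from the factorial formula and $[k]!_{1/q}=q^{-\binom{k}{2}}[k]!_q$ one computes $G_{m,n}(1/q)=q^{-mn}G_{m,n}(q)$, using $\binom{m+n}{2}-\binom{m}{2}-\binom{n}{2}=mn$.

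Before attacking unimodality I would explain why the obvious induction fails. The $q$-Pascal recurrence $\qbinom{m+n}{m}=\qbinom{m+n-1}{m-1}+q^m\qbinom{m+n-1}{m}$ writes $G_{m,n}$ as a sum of two symmetric unimodal polynomials, but with \emph{different} centers of symmetry: the first is centered at $(mn-n)/2$ and the second at $(mn+m)/2$, which differ by $(m+n)/2$. Since the sum of two unimodal sequences is unimodal when they share a center but need not be otherwise, a genuinely new idea is required.

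The approach I would pursue is the linear-algebraic one (due to Proctor, in the spirit of the hard Lefschetz theorem). Form the graded complex vector space $V=\bigoplus_{i=0}^{mn}V_i$, where $V_i$ has a basis indexed by the partitions $\lambda\subseteq m\times n$ with $|\lambda|=i$, so that $\dim V_i=a_i$. The goal is to produce a raising operator $U\colon V\to V$ with $U(V_i)\subseteq V_{i+1}$ such that for every $i\le mn/2$ the iterate $U^{mn-2i}\colon V_i\to V_{mn-i}$ is an isomorphism; together with symmetry this forces $a_i\le a_{i+1}$ for $i<mn/2$, i.e.\ unimodality. The clean way to guarantee this is to build, alongside $U$, a lowering operator $D$ with $D(V_i)\subseteq V_{i-1}$ and let $H$ act on $V_i$ as the scalar $2i-mn$, arranged so that $[H,U]=2U$, $[H,D]=-2D$, and $[U,D]=H$. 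These are the defining relations of $\mathfrak{sl}_2(\mathbb{C})$, and its representation theory decomposes $V$ into irreducibles, each of which has a symmetric unimodal weight-multiplicity sequence centered at the $H$-eigenvalue $0$; since all summands share this center, the dimensions $a_i=\dim V_i$ inherit symmetry and unimodality.

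The main obstacle is the explicit construction of $U$ and $D$ and the verification of $[U,D]=H$. Concretely one sets $U(\lambda)=\sum_\mu c_{\lambda\mu}\,\mu$, summing over partitions $\mu\subseteq m\times n$ covering $\lambda$ (one box added), with an analogous definition for $D$, and must choose the coefficients $c_{\lambda\mu}$ so that the relations hold on every basis vector. Checking $[U,D]=H$ reduces to a local identity at each $\lambda$ comparing the addable and removable boxes, and finding a coefficient choice that works uniformly across the rectangular shape is the delicate point. It is here that the special combinatorial structure of $L(m,n)$ is essential, since rank-unimodality can fail for general rank-symmetric posets.
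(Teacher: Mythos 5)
Your symmetry argument is complete and correct, and your observation about why the $q$-Pascal recurrence cannot be pushed through (the two summands are symmetric about different centers) is exactly the right diagnosis. However, the unimodality half of your proposal is not a proof but a reduction to an unproved claim: in the $\mathfrak{sl}_2$ approach, the \emph{entire content} of the theorem is the existence of coefficients $c_{\lambda\mu}$ making $[U,D]=H$ hold on the lattice $L(m,n)$, and you explicitly flag this as ``the delicate point'' and stop there. For calibration: the paper itself gives no proof of this theorem at all --- it is stated as classical with a list of references --- but the strategy you chose (unimodality of weight multiplicities of $\mathfrak{sl}_2$-modules, i.e.\ Theorem 15 of Stanley's survey) is precisely the one the paper invokes in its Section \ref{sec-5-com} as a possible approach to its own conjectures, so your plan is a standard and viable one; it is only unfinished.

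The clean way to close the gap is to avoid hand-crafting $U$ and $D$ altogether and get the commutation relations functorially. Let $W$ be the irreducible $(m+n)$-dimensional $\mathfrak{sl}_2(\mathbb{C})$-module, with basis $e_1,\dots,e_{m+n}$ and $H e_i=(m+n+1-2i)e_i$, and set $V=\Lambda^m W$. Then $V$ is automatically an $\mathfrak{sl}_2$-module (exterior powers of representations are representations), so nothing needs to be verified. The basis vector $e_{i_1}\wedge\cdots\wedge e_{i_m}$ with $i_1<\cdots<i_m$ has $H$-weight $m(m+n+1)-2\sum_j i_j$, and the bijection sending $\{i_1<\cdots<i_m\}$ to the partition $\lambda$ with $\lambda_j=i_{m+1-j}-(m+1-j)$ identifies the $m$-subsets with $\sum_j i_j=\binom{m+1}{2}+k$ with the partitions of $k$ inside the $m\times n$ box. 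Hence the weight-$(mn-2k)$ space of $V$ has dimension $a_k=[q^k]G_{m,n}(q)$, all weights of $V$ have the same parity as $mn$, and the decomposition of $V$ into irreducibles (each of whose weight multiplicities form a string of $1$'s symmetric about $0$) gives both $a_k=a_{mn-k}$ and unimodality of $(a_0,a_1,\dots,a_{mn})$. Alternatively one can cite Proctor's explicit construction of $U$, $D$ on $L(m,n)$ with verified relations, but that verification is exactly the work your proposal defers; as written, the proposal identifies the obstacle without overcoming it.
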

This important result has many proofs. See \cite{ref-2,ref-7,ref-1,ref-5,ref-4,ref-8,ref-3,ref-6}. It is an outstanding open question to find an explicit order matching proof for the unimodality.
Though the Gaussian polynomials have been extensively studied, the closely related $q$-Catalan polynomials are less studied.

The $q$-Catalan polynomials (or numbers) $C_n(q)$ we are discussing here are defined by
$$C_n(q)= \frac{1}{[n+1]} \qbinom{2n}{n} = \frac{[2n]!}{[n+1]![n]!}.$$
It starts with $C_0(q)=C_1(q)=1,\ C_2(q)=1+q^2,\ C_3(q)=1+q^2+q^3+q^4+q^6,$
$C_4(q)=1+q^2+q^3+2q^4+q^5+2q^6+q^7+2q^8+q^9+q^{10}+q^{12}.$
Clearly, one sees that $C_n(q)$ is symmetric, but not unimodal. However, we have the following conjecture.

\begin{conj}\label{qCn-Conjecture}
  The $q$-Catalan polynomials $C_n(q)$ are unimodal with respect to parity.
\end{conj}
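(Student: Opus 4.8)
The plan is to convert parity-unimodality into ordinary unimodality of a related polynomial and then attack that unimodality with the constant term machinery promised in the abstract. For the first step I would record the elementary equivalence that for any \emph{symmetric} polynomial $A(q)=\sum_i a_iq^i$, the sequence $(a_i)$ is unimodal with respect to parity if and only if $(1+q)A(q)$ is unimodal. This is immediate: writing $(1+q)A(q)=\sum_j d_jq^j$ with $d_j=a_j+a_{j-1}$ gives $d_j-d_{j-1}=a_j-a_{j-2}$, so monotonicity of the combined sequence up to its peak is exactly monotonicity of each parity class up to the center, and symmetry of $A$ supplies the mirror half. Applying this with $A=C_n$ reduces Conjecture~\ref{qCn-Conjecture} to showing that $(1+q)C_n(q)$ is unimodal, equivalently that the coefficients of $(1-q^2)C_n(q)=(1-q)(1+q)C_n(q)$ are nonnegative from the bottom up to the center $n(n-1)/2$ and nonpositive thereafter, the latter being automatic by symmetry.

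For the second step I would exploit the identities $C_n(q)=C_{n,n+1}(q)=\tfrac{1}{[n]}\qbinom{2n}{n-1}$ and, more generally, $C_{m,n}(q)=\tfrac1{[m]}\qbinom{m+n-1}{n}$, in order to seek a constant term representation of $C_{m,n}(q)$ in an auxiliary variable that linearizes the division by $[m+n]$. Concretely I would write $\tfrac{1}{[m+n]}=\tfrac{1-q}{1-q^{m+n}}$ and enforce the resulting cyclic condition via the $q$-analogue of the cycle lemma, obtaining an expression of the form $C_{m,n}(q)=\CT_{z}\!\big(\cdots\big)$. From such a representation one then extracts a constant term formula for the coefficient differences $a_i-a_{i-2}$, that is, for the coefficients of $(1-q^2)C_n(q)$, which is precisely the quantity whose sign I must control.

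The decisive and hardest step is positivity. My preferred route is to produce an explicit expansion of $(1+q)C_n(q)$ as a nonnegative integer combination of shifted Gaussian polynomials $q^{e}G_{a,b}(q)$ all symmetric about the common center $(n^2-n+1)/2$; since each summand is symmetric and unimodal by Theorem~\ref{G-unimodal}, and a sum of symmetric polynomials that are unimodal about a common center is again symmetric and unimodal, this would finish the proof at once. Producing and proving such a co-centered nonnegative decomposition uniformly in $n$ is exactly where I expect to be blocked: a difference of Gaussian polynomials need not be unimodal, so the cancellations encoded in the constant term must be signed uniformly, and I anticipate that only after fixing the smaller parameter (here $m$, with $m\le 5$) do the finitely many constant terms become explicitly and manifestly nonnegative. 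For that reason I would treat the small-$m$ computation as the first rigorous deliverable and regard the uniform positivity as the genuine obstacle remaining in the general conjecture.
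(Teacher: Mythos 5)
This statement is Conjecture~\ref{qCn-Conjecture}, and the paper does not actually prove it: it is left open, the only progress being that it is a special case of Conjecture~\ref{qCmn-Conjecture} via $C_n(q)=C_{n+1,n}(q)$, which the paper settles only when one of the two coprime parameters is at most $5$, hence only for $n\le 5$. Your proposal is equally candid that it stops short of a proof, and the portions you do assert are correct and essentially identical to the paper's framework: your equivalence ``parity unimodality of a symmetric $A(q)$ $\Leftrightarrow$ unimodality of $(1+q)A(q)$'' is the paper's own reformulation (stated in the abstract, and obtained by combining parts (1) and (2) of Lemma~\ref{lem-1}, since $\mathcal{N}\bigl((q^2-1)A(q)\bigr)=\mathcal{N}\bigl((q-1)\cdot(1+q)A(q)\bigr)$), and your plan of a constant-term representation followed by case-by-case positivity for the smaller parameter is exactly what Section~\ref{sec-4-gen} carries out. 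So there is no error to report, but also nothing beyond the paper: both texts halt at the same obstacle, namely positivity uniform in $n$.

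One substantive comment on your third step. You insist that the unimodal summands in a decomposition of $(1+q)C_n(q)$ be symmetric about the \emph{common} center $(n^2-n+1)/2$, and the paper's Section~\ref{sec-5-com} shows this insistence is exactly the crux. Stanley's Theorem~\ref{t-kn-unimodal}, that $K_n(q)=\frac{1+q}{1+q^n}\,C_n(q)$ is symmetric and unimodal, gives the decomposition $(1+q)C_n(q)=K_n(q)+q^nK_n(q)$ into two symmetric unimodal pieces; but their centers differ by $n$, and the paper computes that this only guarantees $c_{i+1}-c_i\ge 0$ for $i<\frac{n^2-4n-1}{2}$, about $n/2$ coefficients short of the range $i<\frac{n^2-3n-1}{2}$ that parity unimodality requires. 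So a decomposition into unimodal pieces that are \emph{not} co-centered provably falls short, and no co-centered nonnegative decomposition is known for general $n$; as you predicted, that is the genuine open problem, and the paper's manifestly positive rational-function expressions exist only for the rational analogue with one parameter at most $5$.
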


We find that Conjecture \ref{qCn-Conjecture} can be extended for rational $q$-Catalan polynomials.
For a pair $(m,n)$ of positive integers, define
$$C_{m,n}(q)= \frac{1}{[n+m]} \qbinom{m+n}{n}=\frac{1}{[m]} \qbinom{m+n-1}{n}.$$
When $m$ and $n$ are coprime to each other, i.e., $\gcd(m,n)=1$, $C_{m,n}(q)$ is known to be in $\mathbb{N}[q]$ (a polynomial with nonnegative coefficients), and is called
the $(m,n)$ rational $q$-Catalan polynomials (or $q$-Catalan numbers). See, e.g., \cite{Haiman}.
\begin{conj}\label{qCmn-Conjecture}
  For a coprime pair of positive integers $(m,n)$, the $(m,n)$-rational $q$-Catalan polynomials $C_{m,n}(q)$ are unimodal with respect to parity.
\end{conj}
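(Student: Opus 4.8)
The plan is to reduce the parity statement to an ordinary unimodality statement and then to attack the latter through a positive constant term formula, the very tool this paper already wields for $m\le 5$. Writing $C_{m,n}(q)=\sum_{i=0}^{d}a_iq^i$ with $d=(m-1)(n-1)$, recall that $C_{m,n}$ is symmetric about $d/2$. The coefficients of $(1+q)C_{m,n}(q)=\sum_i b_iq^i$ satisfy $b_i=a_i+a_{i-1}$, hence
$$b_i-b_{i-1}=(a_i+a_{i-1})-(a_{i-1}+a_{i-2})=a_i-a_{i-2}.$$
Thus $(1+q)C_{m,n}$ is unimodal in the ordinary sense if and only if $a_i\ge a_{i-2}$ for every $i$ up to the center $d/2$, which is precisely the assertion that the two parity subsequences $a_0,a_2,\dots$ and $a_1,a_3,\dots$ are unimodal. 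This proves the equivalence announced in the abstract and turns the conjecture into the single family of inequalities $a_i\ge a_{i-2}$, i.e.\ into the statement that $(1-q^2)C_{m,n}(q)$ has nonnegative coefficients in all degrees $i\le d/2$.

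Next I would install the engine. Using the rational Dyck path model, $a_i$ counts $(m,n)$-Dyck paths $P$ with $\area(P)=i$, so $a_i\ge a_{i-2}$ asks for an injection from paths of area $i-2$ into paths of area $i$ whenever $i\le d/2$; constructing such an \emph{area-raising-by-two} injection uniformly in $(m,n)$ would settle the conjecture combinatorially. For an algebraic handle I would instead represent $C_{m,n}(q)=\CT_x R_{m,n}(x;q)$ for an explicit rational function $R_{m,n}$ of the kind produced by the constant term machinery used here for small $m$. Multiplying by $1-q^2$ and truncating to degrees $\le d/2$ then expresses each difference $a_i-a_{i-2}$ again as a constant term, and the whole conjecture becomes the positivity of this truncated constant term for all $i\le d/2$.

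The decisive step is this positivity, and making it uniform in \emph{both} parameters is the crux. The most promising route is to exhibit $(1+q)C_{m,n}(q)$ as a nonnegative integer combination of Gaussian polynomials $\qbinom{a+b}{a}$, each shifted by a power of $q$ so that all summands are symmetric about the common center $(d+1)/2$: a sum of symmetric unimodal polynomials sharing one center is symmetric and unimodal, and Theorem 1 supplies the unimodality of each piece for free (for instance $(1+q)C_{3,4}(q)=\qbinom{8}{1}+q^3\qbinom{2}{1}$, both summands symmetric about $7/2$). The conceptual reason one should expect parity — rather than ordinary — unimodality is the presence of a degree-two operator: if $C_{m,n}(q)$ is the Hilbert series of a graded space carrying an $\mathfrak{sl}_2$-action whose raising operator shifts the grading by $2$, as is natural for the cohomology of the affine Springer fibre or the compactified Jacobian attached to the singularity $x^m=y^n$, then hard Lefschetz forces each parity class to be unimodal. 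The main obstacle is exactly to realize one of these mechanisms uniformly: the Gaussian decomposition, the area-raising injection, and the degree-two Lefschetz operator are all transparently present in examples, but proving that any one of them persists for every coprime pair $(m,n)$ is of the same order of difficulty as the still-open problem, noted after Theorem 1, of producing an explicit order matching for the Gaussian polynomials themselves.
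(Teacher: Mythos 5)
You have not proved Conjecture~\ref{qCmn-Conjecture}, and the paper does not prove it either: it is stated as a conjecture, checked by computer for $m,n\le 180$, and established only for $m\le 5$ by computing the generating function $F_m(x,q)$ of Proposition~\ref{Fxq} in closed product form and extracting its positive part via the constant term method (Section~\ref{sec-4-gen}), where positivity becomes visible on the resulting rational functions. Your opening reduction is correct and agrees with Lemma~\ref{lem-1}: by the symmetry of $C_{m,n}(q)$, the inequalities $a_i\ge a_{i-2}$ for $i$ up to the center are equivalent to unimodality of $(1+q)C_{m,n}(q)$ and imply parity unimodality (this implication is the direction both you and the paper actually use). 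After that, however, you list four candidate mechanisms --- an area-raising injection, a truncated constant-term positivity, a decomposition into Gaussian polynomials sharing a center, and a hard-Lefschetz $sl(2)$ operator --- and, as your own final sentence concedes, none of them is established uniformly in $(m,n)$. That uniform positivity \emph{is} the conjecture, so what you have is a correct restatement plus a research plan, not a proof. The plans moreover largely coincide with what the paper already discusses: the $sl(2)$ route and its strengthening to $(q,t)$ Schur positivity appear in Section~\ref{sec-5-com}, and the constant-term route is the one the paper carries out, stalling at $m=6$ exactly because the extracted rational functions cease to be visibly positive.

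One step of your proposal is wrong outright rather than merely incomplete: in the rational Dyck path model, $a_i$ does \emph{not} count $(m,n)$-Dyck paths of area $i$. The area generating function is the specialization $C_{m,n}(q,1)$ of the $q,t$-Catalan polynomial, whereas the polynomial of the conjecture is
\[
C_{m,n}(q)=q^{(m-1)(n-1)/2}\,C_{m,n}(q,q^{-1})=\sum_{D\in\mathcal{D}_{m,n}}q^{\coarea(D)+\dinv(D)},
\]
and the two differ already for $(m,n)=(3,2)$: the two $(3,2)$-Dyck paths have areas $0$ and $1$, giving $1+q$, while $C_{3,2}(q)=1+q^2$. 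So an area-raising-by-two injection, even if constructed, would prove unimodality of the wrong polynomial; the statistic you would need to raise by $2$ is $\coarea+\dinv$, and the paper notes in Section~\ref{sec-5-com} that this model seems hard to exploit precisely because $\dinv$ is poorly understood. Your decomposition example $(1+q)C_{3,4}(q)=\qbinom{8}{1}+q^3\qbinom{2}{1}$ is correct and is an attractive direction, but no such decomposition is exhibited, or even conjectured in a precise form, for general coprime $(m,n)$, so the gap remains exactly where you located it.
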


When $\gcd(m,n)>1$, $C_{m,n}(q)$ is usually not a polynomial, while
it has been shown that $\bar{C}_{m,n}(q)=[\gcd(m,n)] C_{m,n}(q)$ is in $\mathbb{N}[q]$. See, e.g., \cite{mnCatlanAndrews,ref-14,mnCatlanKratt}.
\begin{conj}\label{qCmn-2Conjecture}
  For a pair of positive integers $(m,n)$, the polynomial $\bar{C}_{m,n}(q)$ is unimodal with respect to parity.
\end{conj}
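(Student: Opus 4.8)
The plan is to convert parity unimodality into a single coefficient-positivity statement and then to establish that statement, for each fixed $m$, by extracting coefficients from an explicit rational function in $q$ through the constant term method. First I would record the closed product form
$$C_{m,n}(q)=\frac{1}{[m]}\qbinom{m+n-1}{n}=\frac{\prod_{i=1}^{m-1}(1-q^{n+i})}{\prod_{i=2}^{m}(1-q^{i})},$$
obtained by writing every $q$-integer as $[k]=(1-q^{k})/(1-q)$ and cancelling the surplus factor $1-q$. Both $C_{m,n}(q)$ and $\bar C_{m,n}(q)=[\gcd(m,n)]\,C_{m,n}(q)$ are symmetric, so the mechanism behind the equivalence quoted in the abstract applies: a symmetric polynomial $B(q)=\sum_i b_iq^i$ of degree $D$ is unimodal if and only if $[q^{j}]\big((1-q)B(q)\big)=b_j-b_{j-1}\ge 0$ for every $j$ in the first half $j\le D/2$, the second half being forced by the antisymmetry of these differences about $(D+1)/2$. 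Taking $B=(1+q)\bar C_{m,n}$ and using $(1-q)(1+q)=1-q^{2}$ reduces Conjecture~\ref{qCmn-2Conjecture} (and, in the coprime case, Conjecture~\ref{qCmn-Conjecture}) to the positivity of the first-half coefficients of
$$P_{m,n}(q):=(1-q^{2})\,\bar C_{m,n}(q)=[\gcd(m,n)]\,\frac{\prod_{i=1}^{m-1}(1-q^{n+i})}{\prod_{i=3}^{m}(1-q^{i})},$$
where the factor $1-q^{2}$ cancels the $i=2$ term of the denominator for $m\ge2$; the cases $m=1,2$ are immediate.

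Next I would attack $P_{m,n}$ by the constant term / partial-fraction method. For fixed $m$ the denominator $\prod_{i=3}^{m}(1-q^{i})$ is fixed, so I expand $1/\prod_{i=3}^{m}(1-q^{i})=\sum_{k\ge0}d_kq^{k}$, where $d_k\ge0$ counts partitions of $k$ into parts from $\{3,4,\dots,m\}$ and is, by the constant term method, a quasi-polynomial in $k$ of period $\operatorname{lcm}(3,\dots,m)$ with an explicit closed form coming from the residues at the roots of unity. Expanding the numerator as $\prod_{i=1}^{m-1}(1-q^{n+i})=\sum_{S\subseteq\{1,\dots,m-1\}}(-1)^{|S|}q^{\sum_{i\in S}(n+i)}$ and convolving with the factor $[\gcd(m,n)]$ gives the master formula
$$[q^{j}]P_{m,n}(q)=\sum_{r=0}^{\gcd(m,n)-1}\ \sum_{S\subseteq\{1,\dots,m-1\}}(-1)^{|S|}\,d_{\,j-r-\sum_{i\in S}(n+i)},$$
a signed sum of a bounded number of values of the explicit sequence $d_k$. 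For $j$ below $n+1$ only the empty set contributes, so the coefficient is a nonnegative windowed sum of the $d_k$; the content of the conjecture lives in the overlap region, where several terms $-q^{n+i}$ interact.

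The final step, which is also the main obstacle, is to prove this signed sum is nonnegative throughout the first half, uniformly in $n$. For each fixed $m\le5$ the subset sum has at most $2^{m-1}$ terms and $d_k$ is given in closed form, so the inequality collapses to finitely many quasi-polynomial inequalities in the two parameters $j$ and $n$, which can be verified in a straightforward way; this is precisely the computation that establishes the conjectures for $m\le5$. The delicate range is near the middle degree, where the leading positive term $d_j$ is almost exactly cancelled by the first-order corrections $-\sum_i d_{j-n-i}$ and then partially restored by the $|S|\ge2$ terms, so one must track the leading asymptotics of the quasi-polynomial $d_k$ together with its finite differences rather than rely on crude size bounds. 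The $\gcd$-factor only rescales the sequence $d_k$, so the same bookkeeping handles Conjectures~\ref{qCmn-Conjecture} and~\ref{qCmn-2Conjecture} at once; what blocks a proof for all $m$ is controlling this near-cancellation for an unbounded number of overlapping numerator terms, which is the essential difficulty leaving the general conjecture open.
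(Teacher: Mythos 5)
Your reduction steps are all correct: the equivalence between parity unimodality of the symmetric polynomial $\bar C_{m,n}(q)$ and nonnegativity of the first-half coefficients of $(1-q^{2})\bar C_{m,n}(q)$ is exactly Lemma~\ref{lem-1}(2) of the paper, your product formula for $C_{m,n}(q)$ is right, and your master formula writing $[q^{j}]P_{m,n}(q)$ as a signed sum of partition counts $d_k$ (parts in $\{3,\dots,m\}$) is valid bookkeeping. The genuine gap is the final step, which is where the whole content of the result lives: you assert that for each $m\le 5$ the resulting family of quasi-polynomial inequalities in $(j,n)$ ``can be verified in a straightforward way,'' but no verification is exhibited, and the paper's own experience shows it is not straightforward. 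The region of required positivity is unbounded in both $j$ and $n$; near the middle degree the sum is an exact near-cancellation between $d_{j-r}$ and the shifted terms $d_{j-r-n-i}$, so one must control a quasi-polynomial of degree $m-3$ with period $\operatorname{lcm}(3,\dots,m)$ (already $60$ when $m=5$) together with its finite differences on each of many polyhedral pieces. This is precisely the route attempted and abandoned in Section~\ref{sec-3-dir}: for $m=4$ the paper writes $(q^{2}-1)C_{4,12k+1}(q)=(q^{12k+2}-1)\sum_{i=0}^{4k}q^{3i}\sum_{j=0}^{3k}q^{4j}$, which is your $d_k$-expansion in disguise, and concludes that such explicit representations ``will not help to prove our conjecture'' because it is unclear why the needed coefficient signs hold. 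So your plan, as written, stops exactly where the real work begins.

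The missing idea that makes the paper's $m\le 5$ proof actually go through is to sum over $n$ \emph{before} extracting coefficients. The generating function $F_m(x,q)=\sum_{n}(q^{2}-q^{-2})\mathcal{N}C_{m,n}(q)\,x^{n}$ has a closed product form (Proposition~\ref{Fxq}, via the $q$-binomial series), and both the truncation operator $\PTp_q$ and the section operators $X_{m,r}$ (selecting $n\equiv r \pmod m$, which absorbs your $\gcd$ factor) are realized as constant terms in an auxiliary variable $\lambda$. The two-parameter family of inequalities thereby collapses to a single computation with Elliott-rational functions of $(x,q)$, carried out by the Ell2 package, and for $m\le 5$ the output, e.g.
\begin{equation*}
X_{4,2}H_4^2(x,q)=\frac{q^{4}\left(1+q^{2}\right)\left(1+xq^{6}\right)}{\left(1-x\right)\left(1-q^{4}x\right)\left(1-q^{12}x\right)},
\end{equation*}
has a numerator visibly in $\mathbb{N}[q,x]$, so positivity is read off rather than argued; notably this transparency already fails for $m=6$, which is why the paper stops at $5$. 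To complete your proposal you would either have to carry out the polyhedral/asymptotic case analysis in full (decidable in principle, but heavy even at $m=4$), or adopt a summation device of this kind.
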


Conjecture \ref{qCmn-2Conjecture} includes Conjecture \ref{qCmn-Conjecture} as a special case, since $\bar{C}_{m,n}(q)$ reduces to $C_{m,n}(q)$ when $\gcd(m,n)=1$. We state the latter separately because
the combinatorial meaning of $C_{m,n}(q)$ is much more elegant as we will explain later in Section \ref{sec-5-com}. Conjecture \ref{qCmn-Conjecture} includes Conjecture \ref{qCn-Conjecture} as a special case, since
$C_{n+1,n}(q)=C_n(q)$. We state the latter separately because $C_n(q)$ has a different combinatorial interpretation. See Section \ref{sec-5-com}.

We have verified Conjecture \ref{qCmn-2Conjecture} for $m,n\le 180$ by Maple. Observe that $\bar{C}_{m,km}(q)=[m]C_{m,km}= \qbinom{km+m-1}{km}$, which by Theorem \ref{G-unimodal} is indeed unimodal.
We will prove this conjecture for $m\le 5$. Our method is to compute the corresponding
generating functions by means of the constant term method. It turns out that for $m\le 5$, the positivity is transparent in view of their generating functions.

This paper is organized as follows. Section \ref{sec-1-intro} is this introduction. Section \ref{sec-2-pre} introduces the basic concepts and idea for our proof.
The unimodality conjectures are translated by using their generating functions. Section \ref{sec-3-dir} tries direct computation,
which already becomes complicated for $m=4$.
Section \ref{sec-4-gen} uses the constant term method to compute the corresponding generating functions, whose positivity is transparent and hence proves Conjecture \ref{qCmn-Conjecture} for $m\le 5$.
In Section \ref{sec-5-com}, we discuss possible representation approach for settling these conjectures. We also introduce the combinatorial interpretations of $q$-Catalan polynomials.

\def\N{{\mathcal{N}}}
\section{Preliminary \label{sec-2-pre}}
A Laurent polynomial $L(q)$ is said to be symmetric if $L(q^{-1})=L(q)$, and is said to be anti-symmetric if $L(q^{-1})=-L(q)$.

Define the normalization of a polynomial $P(q)$ of degree $n$ by
$$ \mathcal{N} P(q) := \mathcal{N}( P(q))= P(q^2)q^{-n}, \qquad \N( 0)=0.$$
Then the symmetry of $P(q)$ (i.e., $P(q)=q^n P(q^{-1})$) is transformed to the more natural symmetry of the Laurent polynomial $\N {P}(q)$.
The following properties are easy to verify and will be used without mentioning:

\begin{enumerate}
  \item For polynomials $P(q)$ and $Q(q)$, we have $  \mathcal{N} (P(q) Q(q)) =  \mathcal{N}( P(q))  \mathcal{N}(Q(q))$.

  \item The product of two symmetric Laurent polynomials is still symmetric.

  \item If $L_1(q^{-1})=L_1(q)$ and $L_2(q^{-1})=-L_2(q)$, then $L_1(q)L_2(q) \big|_{q=q^{-1}} = -  L_1(q)L_2(q).$
\end{enumerate}


We will also use the following linear operators on Laurent polynomials in $\mathbb{Q}[q,q^{-1}]$.
\begin{align*}
  \PT_q \sum_{i} a_i q^i &= \sum_{i>0} a_i q^i,  \qquad   \text{ (extracting the positive exponent terms) } \\
  \CT_q \sum_{i} a_i q^i &= a_0,  \qquad   \text{ (extracting the constant term) } \\
  \NT_q \sum_{i} a_i q^i &= \sum_{i<0} a_i q^i,  \qquad   \text{ (extracting the negative exponent terms) } \\
\end{align*}
These operators clearly extend to $\mathbb{Q}[q,q^{-1}][[x]]$, the ring of power series in $x$ with coefficients Laurent polynomials in $q$. Indeed, they act
coefficient wise in $x$.

The following lemma is transparent.
\begin{lem}\label{lem-1}
Suppose $P(q)$ is a symmetric polynomial of degree $n$.
Then£¬
\begin{enumerate}
  \item $P(q)$ is unimodal if and only if $ \PT_q \N((q-1)P(q))
  \in \mathbb{N}[q]$.
  \item $P(q)$ is unimodal with respect to parity if and only if
  $$ \PT_q  \N (q^2-1)P(q) = \PT_q  (q^2-q^{-2}) \N P(q) \in \mathbb{N}[q].$$
\end{enumerate}
\end{lem}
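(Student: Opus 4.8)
The plan is to prove both parts by one direct computation. Writing $P(q)=\sum_{i=0}^{n}a_iq^i$ with $a_i=a_{n-i}$, I first record the explicit shape $\N P(q)=P(q^2)q^{-n}=\sum_{i=0}^{n}a_iq^{2i-n}$, a symmetric Laurent polynomial all of whose exponents have the parity of $n$. Using multiplicativity of $\N$ together with $\N(q-1)=q-q^{-1}$ and $\N(q^2-1)=q^2-q^{-2}$ gives the two displayed identities $\N((q-1)P)=(q-q^{-1})\,\N P$ and $\N((q^2-1)P)=(q^2-q^{-2})\,\N P$. Since $q-q^{-1}$ and $q^2-q^{-2}$ are anti-symmetric and $\N P$ is symmetric, property (3) of the preliminaries shows that both products are anti-symmetric Laurent polynomials.

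Next I expand and read off coefficients, with the convention $a_i:=0$ for $i\notin\{0,\dots,n\}$. In part (1), $(q-q^{-1})\N P=\sum_i a_i(q^{2i-n+1}-q^{2i-n-1})$, so the coefficient of $q^{2j-n+1}$ is $a_j-a_{j+1}$; in part (2), $(q^2-q^{-2})\N P=\sum_i a_i(q^{2i-n+2}-q^{2i-n-2})$, so the coefficient of $q^{2j-n+2}$ is $a_j-a_{j+2}$. Because each product is anti-symmetric, its constant term vanishes and its negative part equals minus the reverse of its positive part; hence nothing is lost by restricting attention to $\PT_q$, and $\PT_q(\cdots)\in\mathbb{N}[q]$ is equivalent to nonnegativity of the coefficients at positive exponents. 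The constraint $2j-n+1>0$ selects $j\ge\lceil n/2\rceil$ in part (1), and $2j-n+2>0$ selects $j\ge\lfloor n/2\rfloor$ in part (2). Thus the two membership conditions read, respectively, $a_j\ge a_{j+1}$ for all $j\ge\lceil n/2\rceil$ and $a_j\ge a_{j+2}$ for all $j\ge\lfloor n/2\rfloor$.

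The last step converts these half-range inequalities into (parity) unimodality, and here the symmetry $a_i=a_{n-i}$ is what closes the loop. A symmetric sequence is unimodal exactly when it is non-increasing on its upper half, which is precisely the condition $a_j\ge a_{j+1}$ $(j\ge\lceil n/2\rceil)$ of part (1); this direction needs no further hypotheses. For part (2) I would split $a_j\ge a_{j+2}$ $(j\ge\lfloor n/2\rfloor)$ according to the parity of $j$, obtaining the upper-half non-increasing condition for the even-indexed subsequence $a_0,a_2,\dots$ and for the odd-indexed subsequence $a_1,a_3,\dots$ separately. The delicate point, and the one I expect to require the most care, is to argue that each of these subsequences is itself symmetric, so that \emph{non-increasing on the upper half} again promotes to genuine unimodality; this self-symmetry of both subsequences holds when $n$ is even, which is the case throughout this paper, since $\deg C_{m,n}=(m-1)(n-1)$ and $\deg\bar C_{m,n}$ are always even. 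Tracking the boundary index for the two parities of $n$, and noting that the central difference is forced to vanish by symmetry, then yields the equivalence in both directions.
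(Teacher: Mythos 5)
The paper supplies no proof of this lemma at all --- it is introduced with ``The following lemma is transparent'' --- so your direct verification is exactly the argument the authors had in mind, and the core of it is correct: $\mathcal{N}((q-1)P)=(q-q^{-1})\mathcal{N}P$ and $\mathcal{N}((q^2-1)P)=(q^2-q^{-2})\mathcal{N}P$ by multiplicativity, the coefficient of $q^{2j-n+1}$ in the first product is $a_j-a_{j+1}$ and of $q^{2j-n+2}$ in the second is $a_j-a_{j+2}$, and positivity of the $\PT_q$ part is precisely the half-range conditions $a_j\ge a_{j+1}$ for $j\ge\lceil n/2\rceil$ (part (1)) and $a_j\ge a_{j+2}$ for $j\ge\lfloor n/2\rfloor$ (part (2)). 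Your resolution of part (1) --- a symmetric sequence is unimodal iff it is non-increasing on its upper half --- is valid for every $n$, so part (1) is fully proved.

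The point you flagged as delicate is in fact more than delicate: part (2) as stated is \emph{false} for odd $n$, so the even-degree hypothesis you invoke is a necessity, not a convenience. Take $P(q)=1+q^3$: it is symmetric, and both subsequences $(a_0,a_2)=(1,0)$ and $(a_1,a_3)=(0,1)$ are unimodal, so $P$ is unimodal with respect to parity; yet $(q^2-q^{-2})\mathcal{N}P(q)=q^5-q+q^{-1}-q^{-5}$, whose positive part $q^5-q$ is not in $\mathbb{N}[q]$. The mechanism of failure is the one you identified: for odd $n$ the map $i\mapsto n-i$ swaps the two parity classes, so the even- and odd-indexed subsequences are reverses of each other rather than individually symmetric, and ``non-increasing on the upper half'' no longer promotes to unimodality of each. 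For this reason your closing sentence --- that tracking boundary indices ``for the two parities of $n$'' yields the equivalence in both directions --- cannot be salvaged if it is meant to cover odd $n$; the lemma should simply carry the hypothesis that $n$ is even (what does need case analysis is the parity of $n/2$, which governs whether each subsequence has a single central entry or a central pair, the central pair being equal by symmetry). This restriction costs the paper nothing, as you observe: for coprime $(m,n)$ the degree $(m-1)(n-1)$ is even since $m$ and $n$ cannot both be even, and $\deg\bar C_{m,n}=(\gcd(m,n)-1)+(m-1)(n-1)$ is even in all cases. So your proof, read as a proof of the lemma with the (implicit, and necessary) even-degree hypothesis, is correct and is the paper's intended ``transparent'' argument made explicit --- and it has the added value of showing that the hypothesis genuinely cannot be dropped.
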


Thus Conjecture \ref{qCmn-Conjecture} can be restated as follows.
\addtocounter{theo}{-1}
\renewcommand\thetheo{\ref{qCmn-Conjecture}$^a$}
\begin{conj}\label{qCmn-Conjecturea}
  For a coprime pair of positive integers $(m,n)$, the polynomial
\begin{align}
\PT_q \N (q^2-1) &C_{m,n}(q)=\PT_q (q^2-q^{-2}) C_{m,n}(q^2)q^{-(m-1)(n-1)} \nonumber\\
&=\PT_q(q^2-q^{-2})\frac{(1-q^{2m+2})(1-q^{2m+4})\cdots(1-q^{2m+2n-2})}{(1-q^4)(1-q^6)\cdots(1-q^{2m})}q^{-(m-1)(n-1)}
\end{align}
  has nonnegative integer coefficients.
\end{conj}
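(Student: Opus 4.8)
The plan is to prove the statement for each fixed $m$ (and in particular for $m\le 5$) by turning it into the nonnegativity of an explicit rational generating function in an auxiliary variable $x$ that records $n$, and then reading positivity off. Throughout, write $L_{m,n}(q)=(q^2-q^{-2})\N C_{m,n}(q)$, which is an \emph{anti-symmetric} Laurent polynomial, being the product of the symmetric $\N C_{m,n}(q)$ and the anti-symmetric $q^2-q^{-2}$. By Lemma \ref{lem-1} the assertion is exactly that $\PT_q L_{m,n}(q)\in\mathbb{N}[q]$ for every $n$ coprime to $m$. Since $\PT_q$ acts coefficientwise in $x$, this is equivalent to the nonnegativity of every coefficient of $\sum_{\gcd(n,m)=1}\bigl(\PT_q L_{m,n}(q)\bigr)x^n$.

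First I would compute the ``unfiltered'' generating function in closed form. Using $C_{m,n}(q)=\frac1{[m]}\qbinom{m+n-1}{n}$ and the $q$-binomial theorem $\sum_{n\ge0}\qbinom{m+n-1}{n}y^n=\prod_{i=0}^{m-1}(1-q^iy)^{-1}$, then substituting $q\mapsto q^2$ and folding the degree shift into $y=q^{-(m-1)}x$ via $\N C_{m,n}(q)=C_{m,n}(q^2)q^{-(m-1)(n-1)}$, one obtains
\begin{equation*}
D_m(q;x):=\sum_{n\ge0}\N C_{m,n}(q)\,x^n=\frac{q^{m-1}(1-q^2)}{(1-q^{2m})\prod_{i=0}^{m-1}\bigl(1-q^{2i-(m-1)}x\bigr)}.
\end{equation*}
The exponents $2i-(m-1)$ run symmetrically through $-(m-1),-(m-3),\dots,(m-1)$, which keeps the later bookkeeping invariant under $q\leftrightarrow q^{-1}$. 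Next I would isolate the coprime classes: for $m\le 5$ the condition $\gcd(n,m)=1$ is a single congruence restriction (remove multiples of $m$ when $m$ is prime, remove even $n$ when $m=4$), so a root-of-unity filter in $x$ writes $\sum_{\gcd(n,m)=1}\N C_{m,n}(q)x^n$ as a finite combination of the $D_m(q;\zeta x)$; the discarded terms are precisely the non-coprime $n$ for which $C_{m,n}(q)$ is not a polynomial.

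The crucial step is to apply $\PT_q$ to each $x$-coefficient, which I would realize as a constant-term operator in a new variable $z$ through the identity $\PT_q f(q)=\CT_z\frac{z}{1-z}f(q/z)$, with $\frac{z}{1-z}$ expanded in nonnegative powers of $z$. Then
\begin{equation*}
\sum_{\gcd(n,m)=1}\bigl(\PT_q L_{m,n}(q)\bigr)x^n=\CT_z\,\frac{z}{1-z}\bigl(q^2z^{-2}-q^{-2}z^2\bigr)\sum_{\gcd(n,m)=1}\N C_{m,n}(q/z)\,x^n ,
\end{equation*}
and the right-hand side is the $z$-constant term of an \emph{explicit} rational function of $q,z,x$ coming from the filtered $D_m(q/z;x)$. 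For each fixed $m\le 5$ this constant term can be evaluated by partial fractions in $z$ (the constant term method), producing an explicit rational function of $q$ and $x$; the aim is to present it with numerator in $\mathbb{N}[q][x]$ and denominator a product of factors $1-q^{a}x^{b}$ with $a,b\ge0$, so that nonnegativity of all coefficients is transparent.

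The main obstacle is the interaction between $\PT_q$ and the constant-term evaluation. Although $\PT_q$ commutes with extraction of the $x^n$-coefficient, its realization as $\CT_z$ couples the $q$- and $z$-expansions, and the range of $q$-exponents that contribute positively shifts with $n$; so the delicate point is to arrange the partial-fraction expansion in $z$ so that the residues recombine into a manifestly positive form rather than a difference of large positive quantities. This is routine but grows heavy as $m$ increases, which is exactly why the method settles $m\le 5$ and leaves the general conjecture open: the number of residue classes and the degree of the resulting rational functions both grow with $m$, and no single closed form with visible positivity valid for all $m$ is apparent.
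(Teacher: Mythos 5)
Your proposal is correct and follows essentially the same route as the paper's Section~\ref{sec-4-gen}: your closed form for $D_m(q;x)$ is exactly Proposition~\ref{Fxq} up to the factor $q^2-q^{-2}$, and your pipeline of multisecting $n$ into residue classes modulo $m$, realizing $\PT_q$ as a constant-term kernel, and evaluating by partial fractions to exhibit manifestly nonnegative rational functions for $m\le 5$ is precisely the paper's method. The only cosmetic differences are that you multisect with a root-of-unity filter where the paper uses the section operator $X_{m,r}=\CT_\lambda \lambda^{-r}(1-x\lambda^{-m})^{-1}$, and you extract positive powers directly via $\CT_z\frac{z}{1-z}f(q/z)$ (which forces you to filter to coprime $n$ first, so that the coefficients are Laurent polynomials) where the paper instead exploits anti-symmetry and reflects the finite negative part via $\PTp_q=-\CT_\lambda\frac{\lambda q}{1-\lambda q}(\cdot)$, allowing extraction before sectioning; the two orders agree since the operators commute.
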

\renewcommand\thetheo{\arabic{theo}}
Our point is that it is usually easier to consider generating functions. Let
\begin{align}
  F_m(x,q):= \sum_{n\ge 0} (q^2-q^{-2})\N C_{m,n}(q) x^n.
\end{align}
Note that the coefficients in $x$ are not always Laurent polynomials in $q$. We take $F_m(x,q)$ as an element in $\mathbb{Q}((q))[[x]]$, the ring of
power series in $x$ with coefficients Laurent series in $q$.

For integers $m> r\ge 0$, let $X_{m,r}$ be the linear operator acting on $\mathbb{Q}(q)[[x]]$ by
 \begin{align}
   X_{m,r} \sum_{n\ge 0} a_n(q) x^n &= \sum_{k\ge 0} a_{km+r}(q) x^k.  \quad   \text{ (extracting the terms with special exponents) }
 \end{align}

When $\gcd(m,r)=1$, $X_{m,r} F_m(x,q) \in \mathbb{Q}[q,q^{-1}][[x]]$.

Then Conjecture \ref{qCmn-Conjecturea} is transformed as follows.
\addtocounter{theo}{-1}
\renewcommand\thetheo{\ref{qCmn-Conjecture}$^b$}
\begin{conj}\label{qCmn-Conjectureb}
Let $m>r$ be positive integers with $\gcd(m,r)=1$. Then
  $$ \PT_q  X_{m,r} F_m(x,q) = X_{m,r} \PT_q F_m(x,q)$$
is a power series in $x$ with coefficients in $\mathbb{N}[q]$.
\end{conj}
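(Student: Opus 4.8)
The plan is to turn the statement into a transparent positivity property of an explicit rational function, in three stages: a closed product form for $F_m$, an $x$-sectioning by partial fractions, and a positive-part extraction realized as a constant term in an auxiliary variable. First I would compute $F_m$ in closed form. Combining $C_{m,n}(q)=\frac1{[m]}\qbinom{m+n-1}{n}$ with the $q$-binomial theorem $\sum_{n\ge0}\qbinom{m+n-1}{n}x^n=\prod_{i=0}^{m-1}(1-q^ix)^{-1}$, and noting that the normalization $\N C_{m,n}(q)=C_{m,n}(q^2)q^{-(m-1)(n-1)}$ amounts to the substitution $q\mapsto q^2$, the rescaling $x\mapsto q^{-(m-1)}x$, and a factor $q^{m-1}$, I obtain
\[ F_m(x,q)=(q^2-q^{-2})\,\frac{q^{m-1}}{[m]_{q^2}}\prod_{i=0}^{m-1}\frac{1}{1-q^{2i-(m-1)}x},\qquad [m]_{q^2}=\frac{1-q^{2m}}{1-q^2}. \]
The crucial feature is that the poles $\beta_i:=q^{2i-(m-1)}$ are symmetric about $q^0$, whence $F_m(x,q^{-1})=-F_m(x,q)$; each coefficient of $x^n$ is therefore an antisymmetric Laurent polynomial whose positive part is exactly the quantity controlled by Lemma \ref{lem-1}(2).

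Second I would apply $X_{m,r}$. As $F_m$ is rational in $x$ with the simple poles $x=\beta_i^{-1}$, the partial-fraction expansion $\prod_i(1-\beta_ix)^{-1}=\sum_i A_i(1-\beta_ix)^{-1}$ with $A_i=\prod_{j\ne i}(1-q^{2(j-i)})^{-1}$ reduces $X_{m,r}$ to the elementary rule $X_{m,r}(1-\beta x)^{-1}=\beta^r(1-\beta^mx)^{-1}$. This produces
\[ X_{m,r}F_m(x,q)=(q^2-q^{-2})\,\frac{q^{m-1}}{[m]_{q^2}}\sum_{i=0}^{m-1}\frac{A_i\,q^{r(2i-(m-1))}}{1-q^{m(2i-(m-1))}x}, \]
a rational function of $x$ whose $x$-coefficients are genuine Laurent polynomials in $q$ precisely because $\gcd(m,r)=1$ (which also guarantees that the residues $n\equiv r$ extracted here are the ones for which $C_{m,n}$ is a polynomial).

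Third I would extract $\PT_q$. Using the constant-term identity $\PT_q L(q)=\CT_z L(z)\,\dfrac{q/z}{1-q/z}$ (with $\dfrac{q/z}{1-q/z}$ expanded as $\sum_{j\ge1}(q/z)^j$), the operators $\PT_q$ and $X_{m,r}$ visibly commute, and
\[ \PT_q X_{m,r}F_m(x,q)=\CT_z\ \frac{q/z}{1-q/z}\,X_{m,r}F_m(x,z). \]
Evaluating this constant term by the residue/partial-fraction bookkeeping of the constant term method yields a closed rational function in $x$ and $q$. In the base case $m=2,\ r=1$ the sum already collapses to $X_{2,1}F_2=\frac{q^2}{1-q^2x}-\frac{q^{-2}}{1-q^{-2}x}$, so that $\PT_q X_{2,1}F_2=\frac{q^2}{1-q^2x}$, a series manifestly in $\mathbb{N}[q][[x]]$.

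The main obstacle is the positivity bookkeeping for general $m$. Once $m\ge3$ the residues $A_i$ are no longer monomials, so taking the positive $q$-part genuinely mixes the summands, and one must show that all the sign cancellations inside the constant term leave a series in $\mathbb{N}[q][[x]]$. For $m\le5$ the sum over $i$ is short enough that, after clearing $[m]_{q^2}$ and simplifying, the closed form reduces to a visibly nonnegative combination of fractions $q^{a}(1-q^{b}x)^{-1}$, which proves the conjecture in these cases. Obtaining such a collapse uniformly in $m$---or, failing that, a combinatorial or representation-theoretic description of the positive part (cf. Section \ref{sec-5-com})---is what a proof for all $(m,n)$ would require.
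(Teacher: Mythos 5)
Your proposal is correct as far as it goes and follows essentially the same route as the paper: your closed product formula for $F_m(x,q)$ is exactly Proposition \ref{Fxq}, your partial-fraction and constant-term realization of $X_{m,r}$ and $\PT_q$ is the paper's constant term method (only the order of the two commuting operators differs, which is harmless since applying $X_{m,r}$ first with $\gcd(m,r)=1$ makes the coefficients genuine Laurent polynomials), and positivity is then read off from the explicit rational functions for small $m$. Since the statement is a conjecture that the paper itself establishes only for $m\le 5$, your closing acknowledgement that the residues $A_i$ cease to be monomials and the cancellations become opaque for larger $m$ is precisely the obstruction the paper reports at $m=6$.
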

\renewcommand\thetheo{\arabic{theo}}

The case $\gcd(m,r)=d$ is a little complicated. We need to consider the generating function
\begin{align*}
  \sum_{k\ge 0} \N ((q^2-1)[d] C_{m,km+r}(q)) x^k &= \sum_{k\ge 0} \frac{q^d-q^{-d}}{q-q^{-1}} (q^2-q^{-2}) \N ( C_{m,km+r}(q)) x^k \\
                                           &=  \frac{q^d-q^{-d}}{q-q^{-1}} X_{m,r} \sum_{n\ge 0} (q^2-q^{-2}) \N ( C_{m,n}(q)) x^n
                                           \\
                                           &=  \frac{q^d-q^{-d}}{q-q^{-1}} X_{m,r} F_m(x,q).
\end{align*}
Thus  Conjecture \ref{qCmn-2Conjecture} can be transformed as follows.
\addtocounter{theo}{-1}
\renewcommand\thetheo{\ref{qCmn-2Conjecture}$^b$}
\begin{conj}\label{qCmn-2Conjectureb}
Let $m>r$ be nonnegative integers with $\gcd(m,r)=d$. Then
  $$ \PT_q  X_{m,r} \frac{q^{d}-q^{-d}}{q-q^{-1}}  F_m(x,q) = X_{m,r} \PT_q \frac{q^{d}-q^{-d}}{q-q^{-1}} F_m(x,q)$$
is a power series in $x$ with coefficients in $\mathbb{N}[q]$.
\end{conj}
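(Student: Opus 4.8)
The plan is to reduce the statement, for each fixed $m$, to the nonnegativity of the coefficients of a single explicit rational function, which can then be checked by inspection; this succeeds for $m\le 5$, while the general case remains open precisely because the explicit form ceases to be transparently positive.

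First I would produce a closed form for $F_m(x,q)$. Starting from $\N C_{m,n}(q)=C_{m,n}(q^2)\,q^{-(m-1)(n-1)}$ and the factorization recorded in Conjecture~\ref{qCmn-Conjecturea}, I would apply the $q$-binomial theorem
$$\sum_{n\ge 0}\qbinom{m+n-1}{n}x^n=\prod_{i=0}^{m-1}\frac{1}{1-x\,q^{i}},$$
with $q$ replaced by $q^2$ and $x$ replaced by $x\,q^{-(m-1)}$, so that the factor $q^{-(m-1)n}$ is absorbed into the summation variable. This collapses the sum defining $F_m$ into
$$F_m(x,q)=\frac{(q^2-q^{-2})\,q^{m-1}(1-q^2)}{1-q^{2m}}\prod_{i=0}^{m-1}\frac{1}{1-x\,q^{2i-m+1}},$$
so $F_m$ is a rational function whose $x$-dependence is carried by $m$ geometric factors with the $q$-symmetric nodes $q^{1-m},q^{3-m},\dots,q^{m-1}$.

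Next I would realize both linear operators inside the constant-term formalism. The sectioning operator has the kernel representation
$$X_{m,r}\,G(x)=\CT_z\frac{z^{-r}\,G(z)}{1-x\,z^{-m}},$$
verified by expanding $\frac{1}{1-xz^{-m}}=\sum_{k\ge0}x^k z^{-mk}$ and matching exponents of $z$. Feeding the closed form above into $G$ expresses $X_{m,r}F_m(x,q)$ as the constant term in $z$ of one rational function, which for each fixed $m$ I would evaluate by summing the residues at the poles contributing to the chosen expansion, obtaining an explicit rational function of $x$ and $q$. Finally I would multiply by
$$\frac{q^{d}-q^{-d}}{q-q^{-1}}=q^{1-d}\bigl(1+q^{2}+\cdots+q^{2d-2}\bigr),$$
a manifestly nonnegative symmetric Laurent polynomial, and apply $\PT_q$ coefficientwise in $x$.

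The hard part is the concluding positivity step. The residue evaluation and the operators $X_{m,r}$ and $\PT_q$ are mechanical once the $z$-representation is in place, but nothing forces the resulting closed form to have nonnegative coefficients after $\PT_q$. For $m\le 5$ one finds, after simplification, that the $x^k$-coefficients assemble into manifestly nonnegative Laurent polynomials in $q$---short sums or quotients of cyclotomic-type factors---so that positivity can be read off directly and Conjecture~\ref{qCmn-2Conjectureb} follows in those cases. For general $m$ the expression is not transparently positive (the direct approach already becomes cumbersome at $m=4$), and settling the full conjecture seems to require a genuinely new input, most plausibly an explicit combinatorial order-matching or a representation-theoretic positivity, rather than a further refinement of the constant-term computation.
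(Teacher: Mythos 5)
Your proposal follows essentially the same route as the paper: you derive the same product formula for $F_m(x,q)$ (the paper's Proposition \ref{Fxq}, via the same $q$-binomial series), realize $X_{m,r}$ and $\PT_q$ by the same constant-term kernels, and reduce the statement for each fixed $m\le 5$ to inspecting the nonnegativity of an explicit rational function, exactly as the paper does (the only cosmetic difference being that the paper applies $\PT'_q$ before $X_{m,r}$, while you section first — an immaterial swap since the operators commute). Your assessment of the status also matches the paper's: positivity is transparent only for $m\le 5$, and the general conjecture remains open.
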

\renewcommand\thetheo{\arabic{theo}}
We remark that $\frac{q^{d}-q^{-d}}{q-q^{-1}} F_m(x,q)$ is a power series in $x$ with coefficient Laurent series in $q$, so we need to extend
the $\PT_q$ operator. See Section \ref{sec-4-gen}.

To our surprise, $F_m(x,q)$ has a product formula as follows.
\begin{prop}\label{Fxq}
For any positive integer $m$, we have
\begin{align}
F_m(x,q)  = (q^2-q^{ - 2})\frac{(q-q^{ - 1})}{(q^m-q^{ - m})}\prod_{i = 0}^{m - 1}\frac{1}{ (1-xq^{1 - m}\cdot {q^{2i}})}
\end{align}
\end{prop}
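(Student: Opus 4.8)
The plan is to reduce everything to the classical $q$-binomial theorem and then to track the effect of the normalization operator $\mathcal{N}$ through a single rescaling of the variable $x$. First I would record the generating function
\[
\sum_{n\ge 0}\qbinom{m+n-1}{n}x^n = \prod_{i=0}^{m-1}\frac{1}{1-xq^i},
\]
which is the standard $q$-analogue of the negative binomial series (equivalently the $a=q^m$ specialization of the $q$-binomial theorem, where the infinite products telescope to $(x;q)_m$ in the denominator). Since $C_{m,n}(q)=\frac{1}{[m]}\qbinom{m+n-1}{n}$, dividing by $[m]$ gives immediately
\[
\sum_{n\ge 0}C_{m,n}(q)x^n = \frac{1}{[m]}\prod_{i=0}^{m-1}\frac{1}{1-xq^i}.
\]

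The crux is the interaction of $\mathcal{N}$ with the summation. Using $\mathcal{N}C_{m,n}(q)=C_{m,n}(q^2)q^{-(m-1)(n-1)}$ and factoring the exponent as $(m-1)(n-1)=(m-1)n-(m-1)$, I would write $q^{-(m-1)(n-1)}=q^{m-1}\bigl(q^{1-m}\bigr)^{n}$. In this form the $n$-dependent part of the exponent becomes a rescaling of $x$, while the remaining constant becomes an overall prefactor, so that
\[
\sum_{n\ge 0}\mathcal{N}C_{m,n}(q)\,x^n = q^{m-1}\sum_{n\ge 0}C_{m,n}(q^2)\bigl(xq^{1-m}\bigr)^n.
\]
Substituting $q\mapsto q^2$ and $x\mapsto xq^{1-m}$ into the second displayed generating function then yields
\[
\sum_{n\ge 0}\mathcal{N}C_{m,n}(q)\,x^n = q^{m-1}\,\frac{1-q^2}{1-q^{2m}}\prod_{i=0}^{m-1}\frac{1}{1-xq^{1-m}q^{2i}}.
\]

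It remains to simplify the scalar prefactor and to multiply by $(q^2-q^{-2})$. A direct computation gives $q^{m-1}\frac{1-q^2}{1-q^{2m}}=\frac{q-q^{-1}}{q^m-q^{-m}}$, which is precisely the symmetric form appearing in the statement; multiplying through by $(q^2-q^{-2})$ produces the claimed product formula for $F_m(x,q)$. I do not expect a genuine obstacle, as the whole argument collapses to a single known series identity once the right substitution is found. The only points demanding care are, first, the recognition that the quadratic exponent $(m-1)(n-1)$ separates into an $x$-rescaling and a constant prefactor (this is the one idea that makes the computation work), and second, the bookkeeping of the normalization: the identity $\mathcal{N}C_{m,n}(q)=C_{m,n}(q^2)q^{-(m-1)(n-1)}$ must be read as the \emph{definition} of the $n$-th coefficient for all $n\ge 0$, not merely for coprime pairs where $(m-1)(n-1)$ is the genuine degree, so that $F_m$ is a well-defined element of $\mathbb{Q}((q))[[x]]$ and the substitution $x\mapsto xq^{1-m}$ is legitimate coefficientwise in $x$.
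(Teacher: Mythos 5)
Your proof is correct and takes essentially the same route as the paper's: both reduce to the $q$-binomial generating-function identity for $\qbinom{m+n-1}{m-1}$, split the exponent via $q^{-(m-1)(n-1)}=q^{m-1}\bigl(q^{1-m}\bigr)^n$ so that the substitution $q\mapsto q^2$, $x\mapsto xq^{1-m}$ applies, and then simplify the prefactor $q^{m-1}\frac{1-q^2}{1-q^{2m}}=\frac{q-q^{-1}}{q^m-q^{-m}}$. Your closing remark about reading $\mathcal{N}C_{m,n}(q)=C_{m,n}(q^2)q^{-(m-1)(n-1)}$ as the definition of every coefficient (including $n=0$) is a point the paper glosses over---it even switches between $\sum_{n\ge 0}$ and $\sum_{n>0}$---but this does not change the substance of the argument.
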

\begin{proof}
The proposition is indeed a consequence of the following well-known identity. See, e.g., \cite[pp. 272]{ref-12}.
\begin{align}
\frac{1}{(1-x)(1-xq)(1-x{q^2}) \cdots (1-x{q^m})}= \sum_{n>0} \qbinom{m+n}{m}x^n
\end{align}

We have
\begin{align*}
&\sum_{n>0} q^{\frac{-(n-1)(m-1)-2}{2}}(q^2-1)C_{m,n}(q)x^n\\
&=\sum_{n>0} q^{\frac{-(n-1)(m-1)-2}{2}}(q^2-1)\frac{1-q}{1-q^m}\qbinom{m+n-1}{m-1}x^n\\
&=q^{\frac{m-3}{2}}(q^2-1)\frac{1-q}{1-q^m}\sum_{n>0}\qbinom{m+n-1}{m-1}(q^{\frac{-(m-1)}{2}}x)^n\\
&=q^{\frac{m-3}{2}}(q^2-1)\frac{1-q}{1-q^m}\prod_{i = 0}^{m - 1} \frac{1}{(1-xq^{\frac{-(m-1)}{2}}q^i)}.
\end{align*}
We can get
\begin{align*}
F_m(x,q)&=\sum_{n>0}(q^2-q^{-2}) \N {C}_{m,n}(q)x^n\\
&=\sum_{n>0}(q^2-q^{-2}) C_{m,n}(q^2)q^{-(m-1)(n-1)}x^n \\
&=\sum_{n>0} q^{-(n-1)(m-1)-2}(q^4-1)C_{m,n}(q^2)x^n\\
&=q^{m-3}(q^4-1)\frac{1-q^2}{1-q^{2m}}\prod_{i = 0}^{m - 1} \frac{1}{ (1-xq^{1 - m}\cdot {q^{2i}})}\\
&=(q^2-q^{ - 2})\frac{(q-q^{ - 1})}{(q^m-q^{ - m})}\prod_{i = 0}^{m - 1}\frac{1}{ (1-xq^{1 - m}\cdot {q^{2i}})}.
\end{align*}
\end{proof}

\section{Direct computation}\label{sec-3-dir}
Conjecture \ref{qCmn-Conjecture} can be verified directly for $m=3$, but already becomes complicated for $m=4$.
\subsection{The case $m=3$}
In this case, we have the following explicit expansion.
\begin{prop}
We have
\begin{align}
  (q^2-1) C_{3,n}(q) &= \left\{\begin{array}{ll}
                           q^{3k+1}(- \sum_{i = 0}^k q^{-(3i+1)}+\sum_{i = 0}^k q^{3i+1}), & \text{ if } n=3k+1; \\
                           q^{3k+2}(- \sum_{i = 0}^k q^{-(3i+2)}+\sum_{i = 0}^k q^{3i+2}), & \text{ if } n=3k+2.
                          \end{array}   \right.
\end{align}
Consequently, Conjecture \ref{qCmn-Conjecture} holds true for $m=3$.
\end{prop}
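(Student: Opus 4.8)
The plan is to collapse the binomial to the two-factor product $[n+1][n+2]$ and then exploit that $\gcd(3,n)=1$ forces exactly one of $n+1,n+2$ to be divisible by $3$. First I would record the explicit form $\qbinom{n+2}{n}=\qbinom{n+2}{2}=[n+1][n+2]/[2]$, so that $C_{3,n}(q)=[n+1][n+2]/([2][3])$. Since $q^2-1=(q-1)[2]$, the factor $[2]$ cancels and
\[
(q^2-1)C_{3,n}(q)=(q-1)\frac{[n+1][n+2]}{[3]}.
\]

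Next I would split on the residue of $n$ modulo $3$, the only admissible cases being $n=3k+1$ and $n=3k+2$. If $n=3k+1$ then $n+2=3(k+1)$ and $[n+2]/[3]=[3(k+1)]/[3]=\sum_{i=0}^{k}q^{3i}$ is an honest polynomial; if $n=3k+2$ the factor $n+1=3(k+1)$ plays the same role. The remaining linear factor is dispatched by $(q-1)[\ell]=q^{\ell}-1$, leaving $(q^{\ell}-1)\sum_{i=0}^{k}q^{3i}$ with $\ell=n+1$ or $\ell=n+2$. Expanding this product and reindexing one of the two geometric sums by $j=k-i$ reproduces the claimed signed sums term by term. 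This index bookkeeping is the only delicate point; checking $n=1,2$ gives a quick sanity test on the exponent shifts.

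For the final clause I would invoke Lemma \ref{lem-1}(2): parity unimodality of $C_{3,n}(q)$ is equivalent to $\PT_q\,\N\big((q^2-1)C_{3,n}(q)\big)\in\mathbb{N}[q]$. Applying $\N$ to the formula just obtained means substituting $q\mapsto q^2$ and dividing by $q^{2n}$, since $\deg\big((q^2-1)C_{3,n}\big)=2n$. A one-line computation shows the positive sum turns into $\sum_{i=0}^{k}q^{6i+2}$ (resp.\ $\sum_{i=0}^{k}q^{6i+4}$ when $n=3k+2$), while the negative sum acquires only negative exponents and is therefore killed by $\PT_q$. Hence $\PT_q\,\N\big((q^2-1)C_{3,n}(q)\big)\in\mathbb{N}[q]$, which settles Conjecture \ref{qCmn-Conjecture} for $m=3$.

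I do not anticipate a serious obstacle: once the residue split is made the argument is elementary $q$-algebra. The only genuinely error-prone steps are matching indices between the expanded product and the stated signed sums, and then tracking how the exponents transform under $\N$ and which of them survive $\PT_q$.
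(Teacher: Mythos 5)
Your proposal is correct and follows essentially the same route as the paper: both reduce $(q^2-1)C_{3,n}(q)$ to $-(1-q^{n+1})(1-q^{n+2})/(1-q^3)$ (your $q$-bracket cancellation $(q-1)[n+1][n+2]/[3]$ is the same computation in different notation), split on $n\bmod 3$ so that the factor divisible by $[3]$ yields $\sum_{i=0}^{k}q^{3i}$, and expand $(q^{\ell}-1)\sum_{i=0}^{k}q^{3i}$ with a reindexing to match the signed sums. Your final paragraph just makes explicit the appeal to Lemma \ref{lem-1}(2) that the paper leaves implicit when it concludes the conjecture for $m=3$, and your normalization bookkeeping ($\deg = 2n$, surviving terms $\sum_{i=0}^{k}q^{6i+2}$, resp.\ $\sum_{i=0}^{k}q^{6i+4}$) is accurate.
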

\begin{proof}
By direct computation, we have
\begin{align*}
  (q^2-1) C_{3,n}(q) &= \frac{(q^2-1)(1-q)}{1-q^{n+3}} \frac{(1-q^{n+1})(1-q^{n+2})(1-q^{n+3})}{(1-q)(1-q^2)(1-q^3)} \\
  &= - \frac{(1-q^{n+1})(1-q^{n+2})}{(1-q^3)}
\end{align*}
When $n=3k+1$, we have
\begin{align*}
  (q^2-1) C_{3,3k+1}(q) &= - \frac{(1-q^{3k+2})(1-q^{3k+3})}{(1-q^3)} \\
   &= (q^{3k+2}-1) (1+q^3+q^6+\cdots +q^{3k})\\
   &={q^{3k + 1}}\left( { - \sum\limits_{i = 0}^k {{q^{ - \left( {3i + 1} \right)}} + \sum\limits_{i = 0}^k {{q^{3i + 1}}} } } \right).
\end{align*}
This proves Conjecture \ref{qCmn-Conjecture} for $(m,n)=(3,3k+1)$.

When $n=3k+2$, we have
\begin{align*}
  (q^2-1) C_{3,3k+2}(q) &= - \frac{(1-q^{3k+3})(1-q^{3k+4})}{(1-q^3)} \\
     &= (q^{3k+4}-1) (1+q^3+q^6+\cdots +q^{3k})\\
     &={q^{3k + 2}}\left( { - \sum\limits_{i = 0}^k {{q^{ - \left( {3i + 2} \right)}} + \sum\limits_{i = 0}^k {{q^{3i + 2}}} } } \right).
\end{align*}
This proves Conjecture \ref{qCmn-Conjecture} for $(m,n)=(3,3k+2)$.
\end{proof}

\subsection{The case $m=4$}
This case is already complicated. We have
\begin{align*}
  (q^2-1) C_{4,n}(q) &= - \frac{(1-q^{n+1})(1-q^{n+2})(1-q^{n+3})}{(1-q^3)(1-q^4)}.
\end{align*}
We can have explicit polynomial representation, but that will not help to prove our conjecture. For instance, if $n=12k+1$, then
\begin{align*}
  (q^2-1) C_{4,12k+1}(q) &= - \frac{(1-q^{12k+2})(1-q^{12k+3})(1-q^{12k+4})}{(1-q^3)(1-q^4)} \\
                         &=(q^{12k+2} -1)\cdot  \sum_{i=0}^{4k} q^{3i} \cdot \sum_{j=0}^{3k} q^{4j}.
\end{align*}
Now it is unclear why its coefficients in $q^r$ is negative for $r\le \frac{3n-1}{2}$.

\section{The generating function method}\label{sec-4-gen}

\subsection{Basic idea}
We illustrate the idea by redoing the case $m=3, \ n=3k+1$.
Consider the generating function
\begin{align*}
 \sum_{k\ge 0}(q-q^{-1})q^{-3k} C_{3,3k+1}(q) x^k &= \sum_{k\ge 0} -q^{-3k-1} \frac{(1-q^{3k+2})(1-q^{3k+3})}{(1-q^3)} x^k \\
 &=\frac{1}{1-q^3}\sum_{k\ge 0} (-q^{-3k-1}+q+q^2-q^{3k+4})x^k\\
 &=\frac{1}{1-q^3}\left(-\frac{q^{-1}}{1-q^{-3}x} +\frac{q^2+q}{1-x}-\frac{q^4}{1-q^{3}x} \right)\\
 &=\frac{q \left({1-q}\right)\left({1+q}\right)\left({x+q+{q}^{2}x}\right)} {\left({1-x}\right)\left({x-{q}^{3}}\right)\left({1-{q}^{3}x}\right)}.
\end{align*}
By taking partial fraction decompositions in $q$, we obtain:%
\begin{align*}
\frac{q \left({1-q}\right)\left({1+q}\right)\left({x+q+{q}^{2}x}\right)} {\left({1-x}\right)\left({x-{q}^{3}}\right)\left({1-{q}^{3}x}\right)}
&=\frac{q^2}{(q^3-x)(x-1)}+\frac{q}{(q^3x-1)(x-1)}.
\end{align*}
When expanding as a power series in $x$, the first term has only negative powers in $q$ and the second term has only positive powers in $q$:
\begin{align*}
 \frac{q^2}{(q^3-x)(x-1)}&=\frac{-q^{-1}}{(1-q^{-3}x)(1-x)}=-q^{-1}\Big(\sum_{i\ge 0} x^i\Big)\Big(\sum_{i\ge 0}(q^{-3}x)^i \Big),\\
 \frac{q}{(q^3x-1)(x-1)} &=q\Big(\sum_{i\ge 0} x^i\Big)\Big(\sum_{i\ge 0}(q^{3}x)^i\Big).
\end{align*}
Thus by extracting positive powers in $q$, we obtain
\begin{align*}
\sum_{k\ge 0} \PT_{q} (q-q^{-1})q^{-3k} C_{3,3k+1}(q) x^k
&=\frac{q}{(q^3x-1)(x-1)} \in \mathbb{N}[q][[x]].
\end{align*}
Conjecture \ref{qCmn-Conjectureb} thus holds for the case $(m,r)=(3,1)$.

The case $(m,r)=(3,2)$ can be done similarly.

Extracting positive powers in $q$ of a general class of rational series can be done systematically by the constant term method.
The resulting rational function turns out to be trivially positive for $m\le 5$.

\subsection{The constant term method}
Constant term extraction or residue computation has a long history. See, e.g., \cite{XinThesis} for further references. The fundamental problem
 we are concerned here is to compute the constant term of in a set of variables of a formal series in the field of iterated Laurent series $K=\mathbb{Q}((x_n))\cdots ((x_1))$,
which is called the working field. The reader is referred to \cite{ref-14} for the original development of the field of iterated Laurent series.
Here we only recall that $K$ defines a total ordering $0<x_1<x_2<\cdots <x_n<1$ on the variables (more precisely, a total group order on its monomials),
which can be formally treated as $0<\!\!<x_1<\!\!<x_2<\!\!<\cdots <\!\!< x_n<\!\!<1$.
Every rational function has a unique series expansion in $K$. We will focus on the class of Elliott-rational functions, which are rational functions
whose denominators are the product of binomials. It is known
that the constant term of an Elliott-rational function is still an Elliott-rational function. Efficient algorithms have been developed to evaluate this type of constant terms,
such as the Omega Mathematica package \cite{OmegaUp,Omega}, Ell Maple package \cite{Xin-fast} developed from Algebraic Combinatorics.
See \cite{Xin-CTEuclid} for further references on algorithmic development from Computational Geometry and Algebraic Combinatorics.

We will use the first author's (updated) Ell2 Maple package. We use a list $xvar=[x_1,x_2,\dots, x_n]$ to specify the working field $\mathbb{Q}((x_n))\cdots ((x_1))$.
Let $var=[x_{i_1},\dots, x_{i_s}]$ be a list of variables to
be eliminated, then the constant term of an Elliott-rational function $F(x_1,\dots, x_n)$
$$ \CT_{x_{i_1},\dots, x_{i_s}} F(x_1,\dots, x_n) $$
can be evaluated by the command \texttt{$E\_OeqW(F,xvar,var)$} after loading the Ell2 package. 

In what follows, we always specify the working field $K$ by letting $0<x<q<\lambda<1$. This $K$ includes all the rings, such as $\mathbb{Q}((q))[[x]]$,  appear below as a subring.
Firstly, we shall explain how to realize the $\PT_q$ and $X_{m,r}$ operators by the constant term operator.

For anti-symmetric Laurent polynomials $L(q)$, we have $\PT_q L(q) = -\NT_q  L(q) \Big|_{q=q^{-1}}$.
For anti-symmetric $F(x,q)\in \mathbb{Q}((q))[[x]]$, $\PT_q F(x,q)$ is not in $\mathbb{Q}[q][[x]]$, but $\NT_q F(x,q)$ belongs to $\mathbb{Q}[q^{-1}][[x]]$. It is convenient for us to use
$$\PTp_q F(x,q)= -\NT_q  F(x,q) \Big|_{q=q^{-1}}$$ to
replace $\PT_q F(x,q)$, since they agree when $F(x,q)\in Q[q,q^{-1}][[x]]$. We have
\begin{align}
\PTp_q F(x,q) = -\NT_q  F(x,q) \Big|_{q=q^{-1}} & = -\CT_\lambda  \frac{\lambda q}{1-\lambda q} F(x,\lambda),    \\
X_{m,r} F(x,q) &= \CT_\lambda  \frac{\lambda^{-r}}{1-x \lambda^{-m}} F(\lambda, q).
\end{align}
The above identities are easily verified for $F(x,q)=q^ix^j$ and then extended by linearity for arbitrary $F(x,q)$.

Let us redo the $m=3$ case for the sake of clarity. The cases $n=3k+1$ and $n=3k+2$ can be done simultaneously.
By starting with the formula
$$ F_3(x,q)=\frac{q^{2}\left({1-{q}^{2}}\right)\left({1-{q}^{4}}\right)} {\left({1-x}\right)\left({x-{q}^{2}}\right)\left({1-{q}^{2}x}\right)\left({1-{q}^{6}}\right)},$$
we compute
\begin{align*}
  G_3(x,q)= \PTp_q  F_3(x,q) &=\frac{q^{2}x } {\left({1-{x}^{3}}\right)\left({1-{q}^{2}x}\right)}.
\end{align*}
This clearly belongs to $\mathbb{N}[q][[x]]$, and hence reprove Conjecture \ref{qCmn-Conjectureb} for $(m,r)=(3,1)$ and $(m,r)=(3,2)$. Indeed, a further step gives
$$ X_{3,1} G_3(x,q) =\frac{q^{2}} {\left({1-x}\right)\left({1-{q}^{6}x}\right)} \in \mathbb{N}[q][[x]], $$
$$ X_{3,2} G_3(x,q)=\frac{q^{4}} {\left({1-x}\right)\left({1-{q}^{6}x}\right)} \in \mathbb{N}[q][[x]]. $$
The case $n=3k$ is a little different. We need to compute
$$  H_3^0(x,q) = \PTp_q \frac{q^3-q^{-3}}{q-q^{-1}} F_3(x,q) =\frac{q^{2}} {\left({1-x}\right)\left({1-{q}^{2}x}\right)},$$
which clearly belongs to $\mathbb{N}[q][[x]]$. This implies the positivity of $X_{3,0} H_3^0(x,q)$ and hence reproves Conjecture \ref{qCmn-2Conjectureb} for $(m,r)=(3,0)$. Indeed, we have
$$ X_{3,0} H_3^0(x,q) =\frac{q^{2}\left({1+{q}^{2}x+{q}^{4}x}\right)} {\left({1-x}\right)\left({1-{q}^{6}x}\right)}.  $$

\subsection{The case $m=4$}
We shall establish the following result.
\begin{prop}
  Conjecture \ref{qCmn-2Conjectureb} holds true for $m=4$.
\end{prop}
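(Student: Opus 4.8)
The plan is to mirror exactly the $m=3$ warm-up, but now with four residue classes to track. Since $\gcd(4,r)=d$ forces either $d=1$ (for $r\in\{1,3\}$) or $d\in\{2,4\}$ (for $r\in\{0,2\}$, noting $\gcd(4,0)=4$ and $\gcd(4,2)=2$), I would treat two separate cases. For the coprime classes $r=1,3$, I start from the product formula of Proposition \ref{Fxq} with $m=4$,
\begin{align*}
F_4(x,q)=(q^2-q^{-2})\frac{q-q^{-1}}{q^4-q^{-4}}\prod_{i=0}^{3}\frac{1}{1-xq^{-3}q^{2i}},
\end{align*}
and apply $\PTp_q$ via the constant-term realization $\PTp_q F(x,q)=-\CT_\lambda\frac{\lambda q}{1-\lambda q}F(x,\lambda)$. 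Concretely I would feed the Elliott-rational function into the Ell2 package with working field $0<x<q<\lambda<1$ to compute $G_4(x,q)=\PTp_q F_4(x,q)$, then apply $X_{4,r}G_4(x,q)=\CT_\lambda\frac{\lambda^{-r}}{1-x\lambda^{-4}}G_4(\lambda,q)$ for $r=1,3$. The expectation, in line with $m\le5$ being ``transparent,'' is that each output is a rational function whose numerator and the expansion of whose denominator manifestly lie in $\mathbb{N}[q][[x]]$, settling Conjecture \ref{qCmn-2Conjectureb} for these classes.

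For the non-coprime classes $r=0$ (with $d=4$) and $r=2$ (with $d=2$), I would instead compute the weighted series $\PTp_q\frac{q^d-q^{-d}}{q-q^{-1}}F_4(x,q)$, exactly as in the $H_3^0$ step of the $m=3$ case. That is, I form $H_4^0(x,q)=\PTp_q\frac{q^4-q^{-4}}{q-q^{-1}}F_4(x,q)$ and $H_4^2(x,q)=\PTp_q\frac{q^2-q^{-2}}{q-q^{-1}}F_4(x,q)$, noting that the prefactor cancels nicely against the $\frac{q-q^{-1}}{q^4-q^{-4}}$ already present in $F_4$; for $d=4$ the denominator $q^4-q^{-4}$ cancels entirely, simplifying the computation. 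I then verify positivity of $H_4^0,H_4^2$, and if desired compute $X_{4,0}H_4^0$ and $X_{4,2}H_4^2$ explicitly to exhibit the final $\mathbb{N}[q][[x]]$ form.

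The anti-symmetry bookkeeping is what makes the $\PTp_q$ substitution legitimate: $F_4(x,q)$ is anti-symmetric in $q$ (the factor $q^2-q^{-2}$ is anti-symmetric while the remaining factors assemble into a symmetric series in each $x^n$-coefficient), so $\NT_q F_4$ lands in $\mathbb{Q}[q^{-1}][[x]]$ and $\PTp_q$ recovers the positive part correctly. I would state this anti-symmetry explicitly before invoking the operator identities, since they were only verified on monomials and extended by linearity.

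The main obstacle I anticipate is not conceptual but computational and presentational: after the two constant-term eliminations the intermediate rational functions in $q$ and $x$ can be bulky, and the positivity is ``transparent'' only once the expression is written in the right factored form with all denominator factors of the shape $1-(\text{monomial in }q,x)$. The real work is therefore simplifying each $X_{4,r}$ output so that the numerator visibly has nonnegative coefficients and every denominator factor expands as a geometric series with nonnegative coefficients; I expect to report these final closed forms directly, trusting the Ell2 computation for the elimination steps, just as the paper does for $m=3$.
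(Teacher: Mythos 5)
Your proposal is correct and follows essentially the same route as the paper's proof: compute $G_4=\PTp_q F_4$ by constant-term extraction in the working field $0<x<q<\lambda<1$, apply $X_{4,r}$ for $r=1,3$, and handle $r=2,0$ via $H_4^2=\PTp_q\frac{q^2-q^{-2}}{q-q^{-1}}F_4$ and $H_4^0=\PTp_q\frac{q^4-q^{-4}}{q-q^{-1}}F_4$ followed by $X_{4,2}$ and $X_{4,0}$. As you anticipated, the intermediate expressions $G_4$ and $H_4^2$ are not transparently positive in the paper's computation either; positivity becomes manifest only in the final $X_{4,r}$ closed forms (while $H_4^0$ happens to be transparently positive already).
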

\begin{proof}
We start with the formula
$$ F_4(x,q)=-\frac{q^{5}\left({1-{q}^{2}}\right)\left({1-{q}^{4}}\right)} {\left({x-q}\right)\left({1-qx}\right)\left({x-{q}^{3}}\right)\left({1-{q}^{3}x}\right)\left({1-{q}^{8}}\right)}.$$
By constant term extraction, we obtain
\begin{align*}
  G_4(x,q)= \PTp_q  F_4(x,q)  &= -\frac{x q \left({-q+{x}^{3}-q{x}^{2}+{q}^{2}x-q{x}^{4}}\right)} {\left({1-qx}\right)\left({1-x{q}^{3}}\right)\left({1-{x}^{8}}\right)},
\end{align*}
which do not show the positivity directly. By applying $X_{4,r}$ for $r=1,3$, we obtain
\begin{align*}
  X_{4,1} G_4(x,q) & = \frac{q^{2}\left({1+{q}^{6}x+{q}^{6}{x}^{2}+{q}^{12}{x}^{2}}\right)} {\left({1-{x}^{2}}\right)\left({1-{q}^{4}x}\right)\left({1-{q}^{12}x}\right)},\\
  X_{4,3} G_4(x,q) &=\frac{q^{2}\left({1+{q}^{6}+{q}^{6}x+{q}^{12}{x}^{2}}\right)} {\left({1-{x}^{2}}\right)\left({1-{q}^{4}x}\right)\left({1-{q}^{12}x}\right)}.
\end{align*}
This proves Conjecture \ref{qCmn-Conjectureb} for $(m,r)=(4,1), \ (4,3)$.

For the case $(m,r)=(4,2)$, we need to compute
\begin{align*}
  H_4^2(x,q) = \PTp_q \frac{q^2-q^{-2}}{q-q^{-1}}  F_4(x,q) &= \frac{x q \left({1-{x}^{4}}\right)\left({1-qx+{q}^{2}-{q}^{3}x+{q}^{3}{x}^{3}}\right)} {\left({1-{x}^{2}}\right)\left({1-qx}\right)\left({1-{q}^{3}x}\right)\left({1-{x}^{8}}\right)},
\end{align*}
$$X_{4,2} H_4^2(x,q)=\frac{q^{4}\left({1+{q}^{2}}\right)\left({1+x{q}^{6}}\right)} {\left({1-x}\right)\left({1-{q}^{4}x}\right)\left({1-{q}^{12}x}\right)},$$
which is clearly in $\mathbb{N}[q][[x]]$. %

For the case $(m,r)=(4,0)$, we need to compute
\begin{align*}
  H_4^0(x,q) = \PTp_q \frac{q^4-q^{-4}}{q-q^{-1}}  F_4(x,q) &=  \frac{q^2} {\left({1-{x}^{2}}\right)\left({1-qx}\right)\left({1-{q}^{3}x}\right)},
\end{align*}
$$X_{4,0} H_4^0(x,q)=\frac{q^{2}\left({1+{q}^{2}x+{q}^{4}x+2\,x{q}^{6}+{q}^{8}x+{q}^{10}x+{q}^{12}{x}^{2}}\right)} {\left({1-x}\right)\left({1-{q}^{4}x}\right)\left({1-{q}^{12}x}\right)},$$
which is clearly in $\mathbb{N}[q][[x]]$. %
%
\end{proof}

\subsection{The case $m=5$}
\begin{prop}
  Conjecture \ref{qCmn-2Conjectureb} holds true for $m=5$.
\end{prop}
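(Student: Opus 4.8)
The plan is to follow exactly the template established for $m=3$ and $m=4$, now applied to $m=5$. The starting point is Proposition~\ref{Fxq}, which for $m=5$ gives the explicit product formula
\begin{align*}
F_5(x,q) = (q^2-q^{-2})\frac{q-q^{-1}}{q^5-q^{-5}}\prod_{i=0}^{4}\frac{1}{1-xq^{2i-4}},
\end{align*}
an Elliott-rational function in the working field $K$ with $0<x<q<\lambda<1$. First I would clear the factor $(q^5-q^{-5})^{-1}=(q-q^{-1})^{-1}[5]_{q^2}^{-1}$ into honest binomial denominators so that $F_5(x,q)$ is presented as a ratio whose denominator is a product of the five binomials $1-xq^{2i-4}$ ($i=0,\dots,4$) times $1-q^{10}$, matching the shape used for $F_3$ and $F_4$.

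For the coprime residues $r\in\{1,2,3,4\}$ (all coprime to $5$), I would compute $G_5(x,q)=\PTp_q F_5(x,q)=-\CT_\lambda \frac{\lambda q}{1-\lambda q}F_5(x,\lambda)$ via the Ell2 constant-term routine \texttt{E\_OeqW}, then extract each residue class by $X_{5,r}G_5(x,q)=\CT_\lambda \frac{\lambda^{-r}}{1-x\lambda^{-5}}G_5(\lambda,q)$. As in the $m=4$ case, $G_5$ itself need not be manifestly positive, but each $X_{5,r}G_5(x,q)$ is expected to simplify to an Elliott-rational function whose numerator is a polynomial in $q$ and $x$ with nonnegative coefficients over a denominator that is a product of factors of the form $1-x^aq^b$ with $a,b>0$; such expressions lie visibly in $\mathbb{N}[q][[x]]$, settling Conjecture~\ref{qCmn-Conjectureb} for $(5,1),(5,2),(5,3),(5,4)$.

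For the non-coprime residues $r\in\{0\}$ with $d=\gcd(5,0)=5$ and $r\in\{5\cdot?\}$—more precisely the cases $(5,0)$ (where $d=5$) — I would instead apply $\PTp_q$ to $\frac{q^{d}-q^{-d}}{q-q^{-1}}F_5(x,q)$ before extracting with $X_{5,0}$, exactly mirroring the $H_3^0$ and $H_4^0,H_4^2$ computations. Here the prefactor $\frac{q^5-q^{-5}}{q-q^{-1}}=1+q^2+q^4+q^6+q^8$ cancels the bad denominator $q^5-q^{-5}$ from $F_5$, so the resulting $H_5^0(x,q)$ should be a clean Elliott-rational function; then $X_{5,0}H_5^0(x,q)$ is expected to have a nonnegative numerator over a product of $1-x^aq^b$ factors.

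The main obstacle is not conceptual but computational transparency: for $m=5$ the intermediate $G_5(x,q)$ and the extracted $X_{5,r}G_5$ have substantially larger numerators than in the $m=4$ case, and there is no a priori guarantee that the constant-term output will present itself in manifestly positive form. The crux of the argument is therefore the empirical claim—verified by the Ell2 package—that after simplification every $X_{5,r}G_5(x,q)$ and $X_{5,0}H_5^0(x,q)$ can be written with an explicitly nonnegative numerator and a denominator that is a product of binomials $1-x^aq^b$ with positive $a,b$. Once each such closed form is exhibited, membership in $\mathbb{N}[q][[x]]$ is immediate by geometric-series expansion, and the proposition follows. I would present the five explicit rational expressions (one per residue class) as the proof, each accompanied by the remark that its positivity is transparent, just as was done for $m=3$ and $m=4$.
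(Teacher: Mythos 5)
Your proposal is correct and follows essentially the same route as the paper: start from the product formula for $F_5(x,q)$, compute $G_5=\PTp_q F_5$ by the constant-term realization of $\PTp_q$, apply $X_{5,r}$ for the coprime residues $r=1,2,3,4$ to obtain manifestly positive closed forms, and handle $r=0$ separately via $H_5^0=\PTp_q\frac{q^5-q^{-5}}{q-q^{-1}}F_5$ (which in the paper even turns out to be manifestly positive before applying $X_{5,0}$). The only substance missing is the actual exhibited output of the computation — the explicit positive numerators $P_{5,r}(x,q)$ over the denominator $(1-x^2)(1-x^3)(1-q^{10}x)(1-q^{20}x)$ — which is exactly what the paper's proof consists of, and which you correctly identify as the step to be supplied.
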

\begin{proof}
We start with the formula
$$ F_5(x,q)=-\frac{q^{8}\left({1-{q}^{2}}\right)\left({1-{q}^{4}}\right)} {\left({1-x}\right)\left({x-{q}^{2}}\right)\left({1-{q}^{2}x}\right)\left({x-{q}^{4}}\right)\left({1-{q}^{4}x}\right)\left({1-{q}^{10}}\right)}.$$
By the constant term method, we obtain
\begin{align*}
  G_5(x,q)= \PTp_q  F_5(x,q) &= \frac{x q^{2}\left({1-x}\right)\left({1+x-{x}^{3}-{q}^{2}x+{q}^{2}{x}^{3}+{q}^{2}{x}^{4}}\right)} {\left({1-{x}^{2}}\right)\left({1-{x}^{3}}\right)\left({1-{q}^{2}x}\right)\left({1-{x}^{5}}\right)\left({1-{q}^{4}x}\right)},
\end{align*}
which do not show the positivity directly.
By applying $X_{4,r}$ for $r=1,2,3,4$, we obtain
\begin{align*}
  X_{5,r} G_5(x,q)= \frac{q^2P_{5,r}(x,q)}{{\left({1-{x}^{2}}\right)\left({1-{x}^{3}}\right)\left({1-{q}^{10}x}\right)\left({1-{q}^{20}x}\right)}},
\end{align*}
where
\begin{align*}
P_{5,1}(x,q)&=1 + {q^2}x + {q^2}{x^2} + {q^4}x + {q^4}{x^2} + {q^4}{x^3} + {q^6}{x^2} + 2{\mkern 1mu} {q^6}{x^3} + {q^8}x + {q^6}{x^4}\\
&+ 2{\mkern 1mu} {q^8}{x^2} + {q^8}{x^3} + {q^8}{x^4} + 2{\mkern 1mu} {q^{10}}{x^2} + 2{\mkern 1mu} {q^{10}}{x^3} + {q^{12}}x + {q^{10}}{x^4} + {q^{12}}{x^2}\\
&+ 2{\mkern 1mu} {q^{12}}{x^3} + {q^{14}}x + {q^{12}}{x^4} + {q^{14}}{x^2} + {q^{14}}{x^3} + {q^{14}}{x^4} + 2{\mkern 1mu} {q^{16}}{x^2} + {q^{14}}{x^5}\\
&+ 2{\mkern 1mu} {q^{16}}{x^3} + {q^{16}}{x^4} + {q^{18}}{x^2} + 2{\mkern 1mu} {q^{18}}{x^3} + 2{\mkern 1mu} {q^{18}}{x^4} + {q^{20}}{x^2} + {q^{20}}{x^3} + {q^{20}}{x^4}\\
&+ {q^{20}}{x^5} + {q^{22}}{x^3} + {q^{22}}{x^4} + {q^{22}}{x^5} + {q^{24}}{x^3} + {q^{24}}{x^4} + {q^{26}}{x^2},
\end{align*}
\begin{align*}
  P_{5,2}(x,q)&=x + {q^2}{x^2} + {q^4} + {q^2}{x^3} + {q^4}x + {q^4}{x^2} + {q^6}x + {q^6}{x^2} + {q^6}{x^3} + {q^8}x\\
&+ {q^6}{x^4} + 2{\mkern 1mu} {q^8}{x^2} + 2{\mkern 1mu} {q^8}{x^3} + {q^{10}}x + {q^{10}}{x^2} + 2{\mkern 1mu} {q^{10}}{x^3} + {q^{12}}x + {q^{10}}{x^4}\\
&+ 2{\mkern 1mu} {q^{12}}{x^2} + {q^{12}}{x^3} + {q^{12}}{x^4} + 2{\mkern 1mu} {q^{14}}{x^2} + 2{\mkern 1mu} {q^{14}}{x^3} + {q^{16}}x + {q^{14}}{x^4} + {q^{16}}{x^2}\\
&+ 2{\mkern 1mu} {q^{16}}{x^3} + {q^{18}}x + {q^{16}}{x^4} + {q^{18}}{x^2} + {q^{18}}{x^3} + {q^{18}}{x^4} + {q^{20}}{x^2} + {q^{18}}{x^5} + 2{\mkern 1mu} {q^{20}}{x^3}\\
&+ {q^{20}}{x^4} + {q^{22}}{x^2} + {q^{22}}{x^3} + {q^{22}}{x^4} + {q^{24}}{x^4} + {q^{24}}{x^5} + {q^{26}}{x^3},
\end{align*}
\begin{align*}
  P_{5,3}(x,q)&={x^2} + {q^2} + {q^2}x + {q^4}x + {q^4}{x^2} + {q^4}{x^3} + {q^6}x + 2{\mkern 1mu} {q^6}{x^2} + {q^8} + {q^6}{x^3} + {q^8}x\\
 &+ {q^8}{x^2} + {q^8}{x^3} + {q^{10}}x + {q^8}{x^4} + 2{\mkern 1mu} {q^{10}}{x^2} + {q^{10}}{x^3} + {q^{12}}x + {q^{10}}{x^4} + 2{\mkern 1mu} {q^{12}}{x^2}\\
 &+ 2{\mkern 1mu} {q^{12}}{x^3} + {q^{14}}x + {q^{14}}{x^2} + 2{\mkern 1mu} {q^{14}}{x^3} + {q^{16}}x + {q^{14}}{x^4} + 2{\mkern 1mu} {q^{16}}{x^2} + {q^{16}}{x^3} + {q^{16}}{x^4}\\
 &+ 2{\mkern 1mu} {q^{18}}{x^2} + 2{\mkern 1mu} {q^{18}}{x^3} + {q^{20}}x + {q^{18}}{x^4} + {q^{20}}{x^2} + {q^{20}}{x^3} + {q^{20}}{x^4} + {q^{22}}{x^3} + {q^{22}}{x^4}\\
 &+ {q^{24}}{x^2} + {q^{22}}{x^5} + {q^{24}}{x^3} + {q^{26}}{x^4},
 \end{align*}
 \begin{align*}
  P_{5,4}(x,q)&={x^3} + {q^2}x + {q^2}{x^2} + {q^4} + {q^4}x + {q^4}{x^2} + {q^6} + {q^6}x + {q^6}{x^2} + {q^6}{x^3} + 2{\mkern 1mu} {q^8}x + 2{\mkern 1mu} {q^8}{x^2}\\
 &+ {q^8}{x^3} + {q^{10}}x + 2{\mkern 1mu} {q^{10}}{x^2} + {q^{12}} + 2{\mkern 1mu} {q^{10}}{x^3} + {q^{12}}x + {q^{12}}{x^2} + {q^{12}}{x^3} + {q^{14}}x + {q^{12}}{x^4}\\
 &+ 2{\mkern 1mu} {q^{14}}{x^2} + {q^{14}}{x^3} + {q^{16}}x + {q^{14}}{x^4} + 2{\mkern 1mu} {q^{16}}{x^2} + 2{\mkern 1mu} {q^{16}}{x^3} + {q^{18}}x + {q^{18}}{x^2} + 2{\mkern 1mu} {q^{18}}{x^3}\\
 &+ {q^{20}}x + {q^{18}}{x^4} + 2{\mkern 1mu} {q^{20}}{x^2} + {q^{20}}{x^3} + {q^{22}}{x^2} + {q^{22}}{x^3} + {q^{22}}{x^4} + {q^{24}}{x^3} + {q^{24}}{x^4} + {q^{26}}x.
 \end{align*}
This proves Conjecture \ref{qCmn-Conjectureb} for $m=5$ and $r=1,2,3,4$.

For the case $n=5k$, we have
$$  H_5^0(x,q) = \PTp_q \frac{q^5-q^{-5}}{q-q^{-1}} F_5(x,q) = \frac{q^{2}\left({1+{q}^{2}{x}^{2}}\right)} {\left({1-{x}^{2}}\right)\left({1-{x}^{3}}\right)\left({1-{q}^{2}x}\right)\left({1-{q}^{4}x}\right)},$$
which clearly implies the positivity for $X_{5,0} H_{5}^0(x,q)$.
Indeed, we have
$$X_{5,0} H_{5}^0(x,q)= \frac{q^{2}P_{5,0}(x,q)} {\left({1-{x}^{2}}\right)\left({1-{x}^{3}}\right)\left({1-{q}^{10}x}\right)\left({1-{q}^{20}x}\right)},
 $$
where $P_{5,0}(x,q) \in \mathbb{N}[q,x]$ contains $64$ terms.
This reproves Conjecture \ref{qCmn-2Conjectureb} for $(m,r)=(5,0)$.
\end{proof}

\subsection{The cases $m\ge 6$}
When we calculated the case $m\ge 6$ in a similar way, we are not able to prove the positivity in a straightforward way as before.
Let us explain the problem by working with the $m=6$ case.
We start with the formula
$$ F_6(x,q)=\frac{q^{12}\left({1-{q}^{2}}\right)\left({1-{q}^{4}}\right)} {\left({x-q}\right)\left({1-qx}\right)\left({x-{q}^{3}}\right)\left({1-{q}^{3}x}\right)\left({x-{q}^{5}}\right)\left({1-{q}^{5}x}\right)\left({1-{q}^{12}}\right)}.$$
By the constant term method, we obtain
\begin{align*}
  G_6(x,q)= \PTp_q  F_6(x,q)=-\frac{xq M_6} {\left({1-qx}\right)\left({1-{q}^{3}x}\right)\left({1-{x}^{6}}\right)\left({1-{q}^{5}x}\right)\left({1-{x}^{8}}\right)\left({1-{x}^{12}}\right)
},
\end{align*}
where $M_6$ is a polynomial of many terms that does not show positivity.
We can apply $X_{6,r}$ for $r=1,5$, corresponding to the $\gcd(m,r)=1$ case. Neither of the two cases shows the positivity directly.

For the $\gcd(m,r)=2$ case, i.e., $r=2,4$, we need to compute
$$ H_6^2(x,q)= \PTp_q \frac{q^2-q^{-2}}{q-q^{-1}} F_6(x,q) = \frac{\text{A lengthy polynomial}}{\left({1-x}\right)^{2}\left({1-{x}^{2}}\right)\left({1-{x}^{4}}\right)\left({1-{q}^{6}x}\right)\left({1-{q}^{18}x}\right)\left({1-{q}^{30}x}\right)
},$$
We can apply $X_{6,r}$ for $r=2,4$. Neither of the two cases shows the positivity directly.

For the $\gcd(m,r)=3$ case, i.e., $r=3$,
we need to compute
$$ H_6^3(x,q)= \PTp_q \frac{q^3-q^{-3}}{q-q^{-1}} F_6(x,q) = \frac{\text{A lengthy polynomial}}{\left({1-x}\right)\left({1-{x}^{2}}\right)\left({1-{x}^{4}}\right)\left({1-{q}^{6}x}\right)\left({1-{q}^{18}x}\right)\left({1-{q}^{30}x}\right)
},$$
We can apply $X_{6,3}$, and the result does not show the positivity directly.

For the case $r=0$, we need to compute
$$ H_6^0(x,q)= \PTp_q \frac{q^6-q^{-6}}{q-q^{-1}} F_6(x,q) = \frac{\text{A lengthy polynomial}}{\left({1-x}\right)^{2}\left({1-{x}^{4}}\right)\left({1-{q}^{6}x}\right)\left({1-{q}^{18}x}\right)\left({1-{q}^{30}x}\right)
},$$
We can apply $X_{6,0}$, and the result does not show the positivity directly.

\subsection{An extension}
The computation of the case $m=6$ suggests that we need to consider different cases for proving Conjecture \ref{qCmn-2Conjectureb}.
However, we find a possible unified way to attack the conjecture. More precisely,
let
\begin{align}\label{Gmxq}
G_m(x,q)=\PTp_q F_m(x,q) \ \text{for} \  m\ge 3,
\end{align}
 or equivalently,
\begin{align}
  [q^i] G_m(x,q)= \CT_q -q^i F_m(x,q) =\CT_q -q^i(q^2-q^{ - 2})\frac{(q-q^{ - 1})}{(q^m-q^{ - m})}\prod_{i = 0}^{m - 1}\frac{1}{ (1-xq^{1 - m}\cdot {q^{2i}})} .
\end{align}
Then $G_m(x,q)\in \mathbb{Q}[q][[x]]$, and it is easy to verify that:
\begin{align*}
 [q^0] G_m(x,q)&=   0,\\
 [qx^n] G_{m}(x,q) &= [q] \N (q^2-1) C_{m,n}(q) \ne 0 \text{ only when $m,n$ are both even},\\
 [q] G_4(x,q) &= -\frac{1}{1-x^4},\\
 [q] G_6(x,q) &= -\frac{x^{6} \left({1-{x}^{2}+{x}^{4}-{x}^{6}-{x}^{8}}\right)} {\left({1-{x}^{6}}\right)\left({1-{x}^{8}}\right)\left({1-{x}^{12}}\right)}\\
   &=-\left({x}^{6}+{x}^{10}+2\,{x}^{18}+{x}^{22}+2\,{x}^{30}+{x}^{34}+2\,{x}^{42}+{x}^{54}\right)+\text{positive terms},\\
 [q] G_{10}(x,q) &= -\left({x}^{6}q+{x}^{10}q\right)+\text{positive terms}.
\end{align*}
We summarized all the other cases in the following conjecture.

\begin{conj}
Let $G_m(x,q)$ be as in \eqref{Gmxq}. Then $G_m(x,q)$ is almost positive for $m\ge 3$. More precisely,
besides the above formula, we have
\begin{enumerate}
\item For every $k\ge 3$, $[q] G_{2k}(x,q)$ has only finitely many negative terms.

\item For every $i\ge 2$, $[q^i] G_m(x,q) \in \mathbb{N}[[x]]$.
\end{enumerate}
\end{conj}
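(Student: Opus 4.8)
The plan is to analyze the explicit product formula for $F_m(x,q)$ from Proposition \ref{Fxq} and track how the $\PTp_q$ operator acts coefficient-wise in $q$. Writing $F_m(x,q) = (q^2-q^{-2})\frac{q-q^{-1}}{q^m-q^{-m}}\prod_{i=0}^{m-1}\frac{1}{1-xq^{1-m+2i}}$, the key observation is that for a fixed power of $q$, say $[q^i]G_m(x,q) = \CT_q\,(-q^i)F_m(x,q)$, the result is an \emph{Elliott-rational} function in $x$ whose denominator is a product of binomials $1-x^d$ arising from the constant-term elimination of $q$. First I would establish part (2), the claim that $[q^i]G_m(x,q)\in\mathbb{N}[[x]]$ for $i\ge 2$. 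The strategy here is to show that every negative contribution in $G_m(x,q)$ is concentrated in the $q^0$ and $q^1$ coefficients. Since $G_m(x,q)=\PTp_q F_m(x,q)$ extracts positive powers of $q$, and the extraction is realized via $-\CT_\lambda\frac{\lambda q}{1-\lambda q}F_m(x,\lambda)$, I would expand $F_m$ as an iterated Laurent series in the working field $0<x<q<\lambda<1$ and argue that the combinatorics of the product $\prod_{i=0}^{m-1}\frac{1}{1-x q^{1-m+2i}}$ forces nonnegativity once the low-degree-in-$q$ boundary terms are separated off.

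Next I would handle part (1), the finiteness of negative terms in $[q]G_{2k}(x,q)$ for $k\ge 3$. Here the plan is to obtain a closed rational form for $[q]G_{2k}(x,q)$ by specializing the constant-term computation to the coefficient of $q^1$. The displayed examples ($[q]G_4$, $[q]G_6$, $[q]G_{10}$) suggest that $[q]G_{2k}(x,q)$ is a rational function of $x$ whose denominator is a product $(1-x^{a_1})(1-x^{a_2})(1-x^{a_3})$ with the $a_j$ depending on $m=2k$, and whose numerator is a polynomial in $x$ of bounded degree. Since any rational function of the form $\frac{N(x)}{\prod_j(1-x^{a_j})}$ with $N(x)$ a genuine polynomial has a power-series expansion whose coefficients are \emph{eventually} governed by the positive quasi-polynomial behavior of $\frac{1}{\prod_j(1-x^{a_j})}$, the finiteness of negative coefficients reduces to showing that the leading asymptotic density of the expansion is strictly positive and that negativity can only occur among finitely many low-order terms. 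Concretely, I would write $[q]G_{2k}(x,q)=\frac{N_{2k}(x)}{D_{2k}(x)}$, verify $D_{2k}(x)=\prod_j(1-x^{a_j})$ has all roots on the unit circle with $x=1$ of multiplicity equal to the number of factors, and then invoke a partial-fraction/residue estimate showing the dominant contribution at $x=1$ is positive.

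The main obstacle will be controlling the numerators uniformly in $m$. For fixed small $m$ one simply computes $G_m$ and reads off positivity, but a proof for all $m$ requires a structural handle on the $q$-expansion of $\prod_{i=0}^{m-1}\frac{1}{1-xq^{1-m+2i}}$ after the $\PTp_q$ extraction, and the numerators $M_6$, the ``lengthy polynomials,'' and $P_{5,r}$ in the excerpt show that these grow rapidly and lack an obvious sign pattern before the $X_{m,r}$ operator is applied. I expect the crux of part (2) to be an inductive or generating-function argument reorganizing $G_m$ so that the $q^i$-coefficient for $i\ge 2$ is manifestly a nonnegative combination of terms $\frac{x^b}{\prod(1-x^{a_j})}$; the symmetry properties of $\N$ from Section \ref{sec-2-pre} and the fact that $F_m$ is anti-symmetric in $q$ should let me pair up terms and cancel the only potential negative contributions into the $q^0,q^1$ part, which is exactly where the finitely-many exceptions in part (1) live. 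The delicate point is proving that \emph{no} negative term survives in degrees $i\ge 2$ for every $m$ simultaneously, rather than merely verifying it case by case; securing that uniform cancellation, likely via an explicit residue computation of $\CT_q(-q^i)F_m$ that exhibits the answer as a positive sum, is where the real work lies.
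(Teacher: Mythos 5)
This statement is a \emph{conjecture} in the paper, not a proven result: the authors support it only by machine computations (the displayed formulas for $[q]G_4$, $[q]G_6$, $[q]G_{10}$, and the $m\le 6$ case studies) and explicitly leave its resolution to future work, proposing at the end of Section \ref{sec-4-gen} the open \textbf{Problem} of decomposing an Elliott-rational function into trivially positive summands as the intended route. So there is no paper proof to compare against; the only question is whether your proposal stands on its own as a proof. It does not. For part (2), every decisive step is deferred: you state that the crux is ``an inductive or generating-function argument'' exhibiting $[q^i]G_m$ as a positive sum, and you acknowledge that ``securing that uniform cancellation \ldots is where the real work lies.'' The one concrete mechanism you invoke, the anti-symmetry $F_m(x,q^{-1})=-F_m(x,q)$, only pairs the coefficient of $q^i$ with that of $q^{-i}$; it gives $[q^0]G_m=0$ but provides no means of pushing negativity out of the coefficients with $i\ge 2$ and into $q^1$. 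That such a mechanism must exist is precisely the content of the conjecture, and the paper's own $m=6$ computation (the numerator $M_6$ ``does not show positivity'') shows it is not visible at the level of the product formula you start from.

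For part (1) there is, in addition to the lack of any uniform derivation of the claimed form $N_{2k}(x)/\prod_j(1-x^{a_j})$ (you extrapolate the denominator structure from three examples, with no argument that it persists, let alone a formula for the $a_j$ and for $N_{2k}$ as functions of $k$), a concrete analytic flaw. Your plan is to show ``the dominant contribution at $x=1$ is positive.'' But in the very example the paper displays, $[q]G_6(x,q)$ has denominator $(1-x^6)(1-x^8)(1-x^{12})$, and since $6$, $8$, $12$ are all even, the point $x=-1$ is a root of all three factors; one checks the numerator $-x^6(1-x^2+x^4-x^6-x^8)$ does not vanish there, so $x=-1$ is a pole of the same order $3$ as $x=1$. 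There is therefore no single dominant singularity: the coefficient sequence is eventually a quasi-polynomial of degree $2$ whose leading coefficient depends on the parity of the exponent, and eventual nonnegativity requires the leading coefficient to be positive in \emph{every} residue class, not just the residue at $x=1$. This can be checked for each fixed $k$ once the rational function is computed (which is why the conjecture is plausible), but your argument as stated would accept rational functions with infinitely many negative coefficients, and it gives no handle on all $k\ge 3$ simultaneously. Both parts thus remain open exactly where the paper left them.
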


A unified approach for $m=4$ and $r=1,3$ can be given as follows. We have
\begin{align*}
 G_4(x,q)&= -\frac{x q \left({-q+{x}^{3}-{x}^{2}q+{q}^{2}x-{x}^{4}q}\right)} {\left({1-qx}\right)\left({1-{q}^{3}x}\right)\left({1-{x}^{8}}\right)} \\
         &= \frac{q^{2}x \left({1+{x}^{2}-qx-{q}^{2}{x}^{4}+{q}^{3}{x}^{5}}\right)} {\left({1-qx}\right)\left({1-{q}^{3}x}\right)\left({1-{x}^{8}}\right)}-\frac{x^{4}q } {1-{x}^{8}}\\
         &=\frac{q^{2}x \left(1-qx+x^2(1-{q}^{2}{x}^{2})\right)} {\left({1-qx}\right)\left({1-{q}^{3}x}\right)\left({1-{x}^{8}}\right)}+\frac{q^{2}x \left({q}^{3}{x}^{5}\right)} {\left({1-qx}\right)\left({1-{q}^{3}x}\right)\left({1-{x}^{8}}\right)}-\frac{x^{4}q } {1-{x}^{8}}\\
         &=\frac{q^{2}x \left(1+x^2+qx^3\right)} {\left({1-{q}^{3}x}\right)\left({1-{x}^{8}}\right)}  
         +\frac{q^{5}x^6 } {\left({1-qx}\right)\left({1-{q}^{3}x}\right)\left({1-{x}^{8}}\right)}-\frac{x^{4}q } {1-{x}^{8}}
\end{align*}
It follows that
\begin{align*}
  X_{4,r} G_4(x,q) =X_{4,r} \frac{q^{2}x \left(1+x^2+qx^3\right)} {\left({1-{q}^{3}x}\right)\left({1-{x}^{8}}\right)}+X_{4,r} \frac{q^{5}x^6 } {\left({1-qx}\right)\left({1-{q}^{3}x}\right)\left({1-{x}^{8}}\right)},
\end{align*}
which is clearly positive.

%

Generally, for odd $m$, we need to show the positivity of $G_m(x,q)$. It is possible to decompose $G_m(x,q)$ as a sum of trivially positive rational functions.
The decomposition is nontrivial even for $G_5(x,q)$. The even $m$ case needs a minor modification. We succeeded in doing this type of decomposition in \cite{ref-17}, and
hence decomposition of $G_m(x,q)$ for small $m$, at least for $m\le 6$, should be possible. This suggested us to reconsider the following problem in the near future.

\medskip
\noindent
\textbf{Problem:} Given an Elliott rational function $Q$, how to decompose
$Q=\sum_i Q_i$ with $Q_i$ all trivially positive if possible.

\section{Combinatorial model}\label{sec-5-com}

\subsection{Combinatorial interpretation of $C_{m,n}(q)$}
In this section $(m,n)$ is always a coprime pair of positive integers, unless specified otherwise. The general case
$\gcd(m,n)=d>1$ can be considered but is much more complicated.

We should mention that representation theory maybe suitable for settling our conjectures. For instance, Conjecture \ref{qCmn-Conjecture} can be proved if we can find (usually hard to find) an $sl(2)$ module whose character is $q^{-(m-1)(n-1)/2}C_{m,n}(q)$.
This is based on the following well-known result. See, e.g., \cite[Theorem 15]{ref-8}.
\begin{theo}
  Let $\psi: sl(2)\mapsto gl(n)$ be a representation of $sl(2)$ with
  $$\textrm{char } \psi = \sum_{i} b_iq^i.$$
 Then the sequence $\dots, b_{-2},b_{-1},b_0,b_1,b_2,\dots$ is symmetric and unimodal with respect to parity.
\end{theo}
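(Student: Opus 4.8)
\emph{The plan is to} reduce everything to the irreducible representations and exploit complete reducibility of finite-dimensional $sl(2)$-modules. Write $\psi=\bigoplus_{d\ge 0} m_d V_d$, where $V_d$ denotes the irreducible $sl(2)$-module of dimension $d+1$ and the multiplicities $m_d$ lie in $\mathbb{N}$. Since the character is additive over direct sums, $\sum_i b_i q^i=\sum_{d\ge 0} m_d\,\textrm{char}\,V_d$, so it suffices to understand a single $\textrm{char}\,V_d$ and then to check that the relevant structure survives nonnegative combinations.

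First I would record the weight structure of $V_d$. Diagonalizing the Cartan element $h$, the module $V_d$ has one-dimensional weight spaces of weights $-d,-d+2,\dots,d-2,d$, whence
$$ \textrm{char}\,V_d = q^{-d}+q^{-d+2}+\cdots+q^{d-2}+q^{d}=\sum_{j=0}^{d} q^{\,d-2j}. $$
Two features are immediate: this Laurent polynomial is palindromic (centered at exponent $0$), and all of its exponents are congruent to $d$ modulo $2$. Hence $\textrm{char}\,V_d$ is supported on a single parity class, where it is an unbroken run of $1$'s centered at $0$, i.e.\ a symmetric, unimodal sequence within that class.

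Next I would assemble the global statement. Separating the sequence $(b_i)$ according to the parity of $i$, each irreducible constituent $V_d$ contributes (with multiplicity $m_d\ge 0$) to exactly the parity class of $d$, where it adds a symmetric unimodal interval of $1$'s centered at $0$. Thus the even-index subsequence $\dots,b_{-2},b_0,b_2,\dots$ is a nonnegative integer combination of such intervals, all sharing the common center $0$, and likewise for the odd-index subsequence. The proof then closes with the elementary lemma that a nonnegative combination of symmetric unimodal sequences with a \emph{common} center of symmetry is again symmetric and unimodal, which follows because the defining inequalities (increasing up to the center, decreasing afterward) are preserved under addition. This delivers symmetry and parity-unimodality of $(b_i)$ at once.

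The representation theory here is entirely standard, so the crux is not depth but bookkeeping: one must verify that each $V_d$ lands in a single parity class and that the common-center hypothesis genuinely holds, which it does since every weight diagram of $V_d$ is centered at $0$. The one content-bearing point is that it is precisely the \emph{common} center that forces unimodality of the sum, since without it a sum of unimodal sequences need not be unimodal. As an alternative closer to the ``order-matching'' philosophy, I would instead argue directly from the raising operator $e\in sl(2)$: since $e$ carries the weight-$i$ space into the weight-$(i+2)$ space and is injective for $i<0$ and surjective for $i\ge 0$, one reads off $b_i\le b_{i+2}$ for $i<0$ and $b_i\ge b_{i+2}$ for $i\ge 0$ within each parity class; this route avoids the explicit decomposition but must still invoke self-duality of $V_d$ to recover the palindromy. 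Either way the theorem itself is the tool, and the genuine difficulty lies, as the paper observes, in \emph{producing} an $sl(2)$-module whose character realizes $q^{-(m-1)(n-1)/2}C_{m,n}(q)$.
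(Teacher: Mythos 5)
Your proof is correct, but there is nothing in the paper to compare it against: the paper does not prove this theorem at all. It is quoted as a known result, with a pointer to Stanley's survey (Theorem 15 of the reference cited there), and is used only as motivation for seeking an $sl(2)$-module with character $q^{-(m-1)(n-1)/2}C_{m,n}(q)$. What you have written is the standard textbook argument: complete reducibility (Weyl's theorem), the weight-space description $\textrm{char}\, V_d = q^{-d}+q^{-d+2}+\cdots+q^d$ showing each irreducible contributes a palindromic block of $1$'s supported on a single parity class and centered at $0$, and the elementary closure of symmetric-unimodal-with-common-center sequences under nonnegative sums. All three steps are sound; in particular you correctly flag that the common center is what rescues unimodality under addition, which is the only place a careless argument could go wrong. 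One small remark on your ``alternative'' route via the raising operator $e$: the injectivity of $e$ on negative weight spaces and surjectivity on nonnegative ones is itself most easily proved by decomposing into irreducibles (or by an equivalent $e^k\colon M_{-k}\to M_k$ isomorphism argument), so it is less an independent proof than a repackaging of the same decomposition; it does, however, have the virtue of giving the inequalities $b_i\le b_{i+2}$ ($i<0$) and $b_i\ge b_{i+2}$ ($i\ge 0$) directly, without the summation lemma.
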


Let $\mathcal{D}_{m,n}$ be the set of Dyck paths in the $m\times n$ lattice rectangle, i.e., paths from $(0,0)$ to $(m,n)$ with unit North step and unit East step, that never go below the diagonal line $y=nx/m$.
The rational $q,t$ Catalan polynomials are defined by
$$ C_{m,n}(q,t)=  \sum_{D \in \mathcal{D}_{m,n}} q^{\area(D)} t^{\dinv(D)},$$
where the sum is over Dyck paths in the $m\times n$ lattice rectangle, $\area(D)$ gives the number of lattice squares between the path and the diagonal,  and $\dinv(D)$ is a Dyck path statistic that can also be given a relatively simple geometric construction. There is also an equivalent interpretation in terms of simultaneous core partitions. See \cite{Armstrong-mn-core,ref-19}.

The rational $q$-Catalan polynomials are specializations of the $q,t$ Catalan polynomials introduced by Garsia and Haiman \cite{Garsia-Haiman}. They have the following combinatorial interpretation:
 $$ C_{m,n}(q) =q^{(m-1)(n-1)/2} C_{m,n}(q,q^{-1}) = \sum_{D \in \mathcal{D}_{m,n}} q^{\coarea(D)+\dinv(D)}.$$ It seems hard to show the parity unimodality of $C_{m,n}(q)$ by this model, because the $\dinv$ statistic is still hard to understand.

A mysterious property of $C_{m,n}(q,t)$ is its symmetry in $q$ and $t$, i.e., $C_{m,n}(q,t)=C_{m,n}(t,q)$. As a symmetric polynomial, $C_{m,n}(q,t)$ has a Schur expansion
$$ C_{m,n}(q,t)= \sum_{\lambda} c_\lambda s_\lambda[q,t],$$
where $\lambda$ can has only two parts, say $\lambda=(\lambda_1,\lambda_2)$, and
$$s_{\lambda_1\lambda_2}[q,t]=(qt)^{\lambda_2}[\lambda_1-\lambda_2]_{q,t}, \qquad \text{where } [k]_{q,t}=q^{k-1}+q^{k-2}t+\cdots +t^{k-1}.$$
Then the $(q,t)$ Schur positivity of $C_{m,n}(q,t)$ (i.e., $c_\lambda\ge 0$ for all $\lambda$) implies the unimodality of $C_{m,n}(q)$ with respect to parity.

The symmetry of $C_{m,n}(q,t)$ is a consequence of the rational shuffle conjecture,
 which can be written as
$$ Q_{m,n}(-1)^n =H_{m,n}(X;q,t),$$
where $H_{m,n}(X;q,t)$ is the Hikita polynomial that has combinatorial interpretation as a sum over rational parking functions \cite{Hikita}, and
$Q_{m,n}$ is a symmetric function operator introduced by Gosky and Negut \cite{Gosky-Negut}.
The rational Shuffle conjecture was proved by Mellit \cite{Mellit}. Detailed definitions are too involved.
The reader is referred to \cite{ref-16} for further information on this topic. We should mention that no combinatorial proof of this symmetry is known up to now.

As a symmetric function in $X$, we can write
$$H_{m,n}(X;q,t)= \sum_{\lambda\vdash n} [s_\lambda]_{m,n} s_\lambda[X].$$
Then $C_{m,n}(q,t)$ is just $[s_{1^n}]_{m,n}$. From the algebraic side, $H_{m,n}(X;q,t)$ is easily seen to be $q,t$ symmetric, so is its coefficients $[s_\lambda]_{m,n}$.
It is then natural to conjecture that $[s_\lambda]_{m,n}(q,t)$ is $(q,t)$ Schur positive. The positivity (though not stated this way)
has been proved for the case $n=2$ by Leven \cite{Leven} and for the case $n=3$ by Qiu and Remmel \cite{ref-16}.


\subsection{Combinatorial interpretation of $C_n(q)$}
Since $C_n(q)=C_{n+1,n}(q)$, we have a combinatorial interpretation of $C_n(q)$. Indeed, let $\mathcal{D}_n$ be short for $\mathcal{D}_{n,n}$. Then we have
$$ C_{n}(q) =\frac{1}{[n+1]_q} \qbinom{2n}{n} = \sum_{D \in \mathcal{D}_{n}} q^{\coarea(D)+\dinv(D)}= \sum_{D \in \mathcal{D}_{n}} q^{\coarea(D)+\bounce(D)}.$$
The second equality follows by the symmetry of $C_{n}(q)$ and by application of the zeta map, which is a bijection from $\mathcal{D}_n$ to itself that takes $\dinv$ to $\area$ and $\area$ to $\bounce$.
See \cite{ref-19,Loehr-higher-qtCatalan}. Usually we think the statistic $\bounce$ is simpler than $\dinv$ in this case, (while for $D\in \mathcal{D}_{m,n}$, the $\dinv(D)$ is known but the $\bounce(D)$ is not).

Currently the simplest way to compute $\coarea(D)$ and $\bounce(D)$ might be as follows (see \cite{vect-k}). Firstly, there is a easy way to convert $D \in \mathcal{D}_{n}$ to a standard Young tableau $T(D)$ of shape $(n,n)$.
Then $\coarea(D)$ is just the sum of the first row entries minus $\binom{n}{2}$, and $\bounce(D)$ is the sum of the first row ranks, where the ranks of the entries of $T$ can be computed in a simple way: i) $r(1)=0$; ii) $r(i)=r(i-1)$ if $i$ is in the first row;
iii) $r(j)=r(i)+1$ if $j$ is under $i$. For example, Figure \ref{fig:bounce} illustrates these statistics for the case $n=3$.

\begin{figure}[!ht]
\centering{
\includegraphics[height=2.3 in]{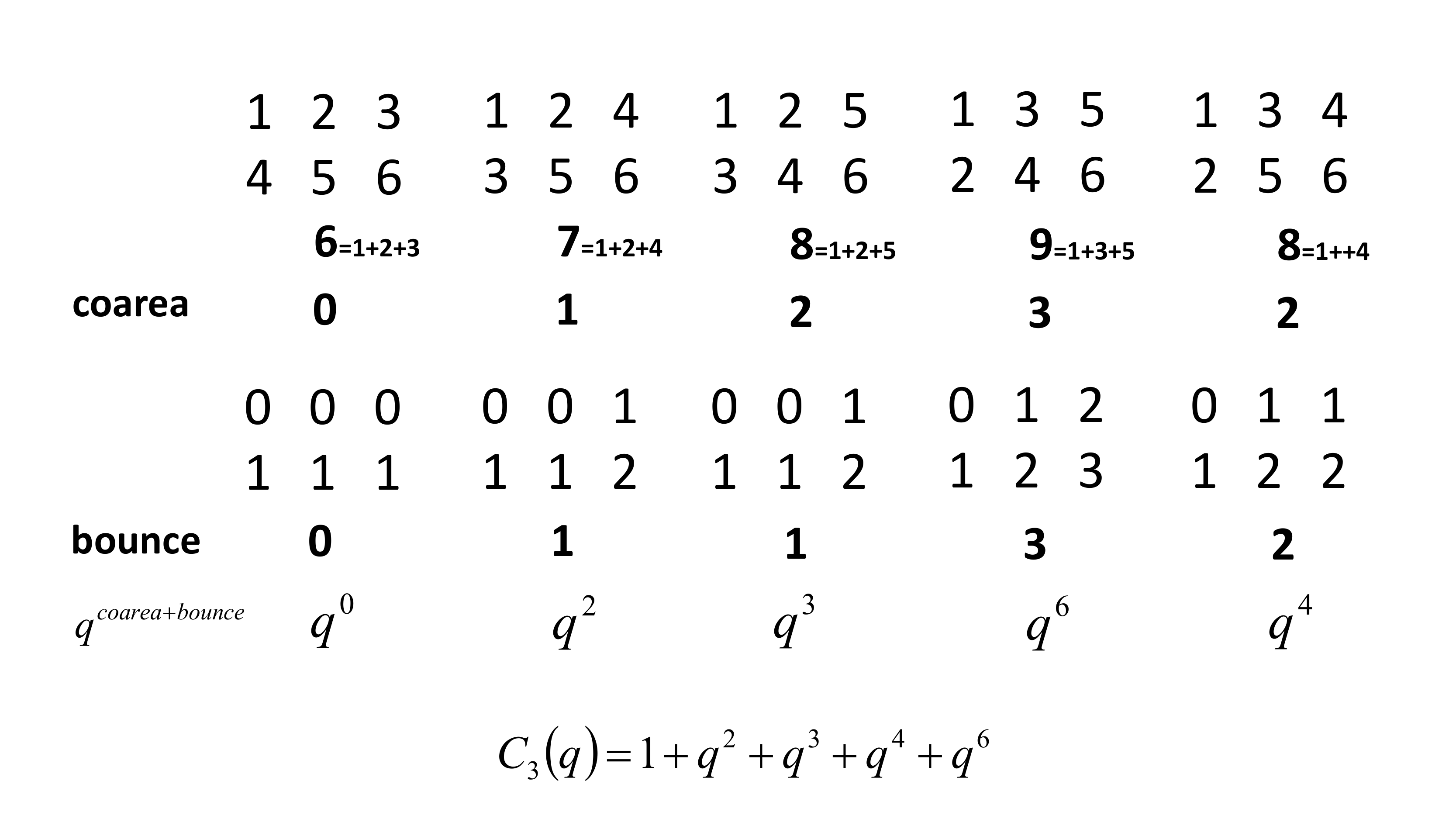}}
\caption{Bounce and coarea for $\mathcal{D}_3$: The top row gives the 5 standard Young tableaux. The bottom row gives the corresponding rank tableaux.\label{fig:bounce}}
\end{figure}

There is a better interpretation found earlier than
the above statistics. See, e.g., \cite{ref-12}.
\begin{align*}
C_n(q)= \frac{1}{[n+1]} \qbinom{2n}{n}&=\sum_{D\in \mathcal{D}_n}q^{maj(D)}
\end{align*}
where $maj(D)$ is the major index of $D$, usually defined as
the sum of the descent positions. (Here a descent corresponds to a $EN$ turn). The major index is also defined for standard Young tableaux. We only
exhibit the major index for $C_3(q)$.

\begin{figure}[!ht]
\centering{
\includegraphics[height=1.1 in]{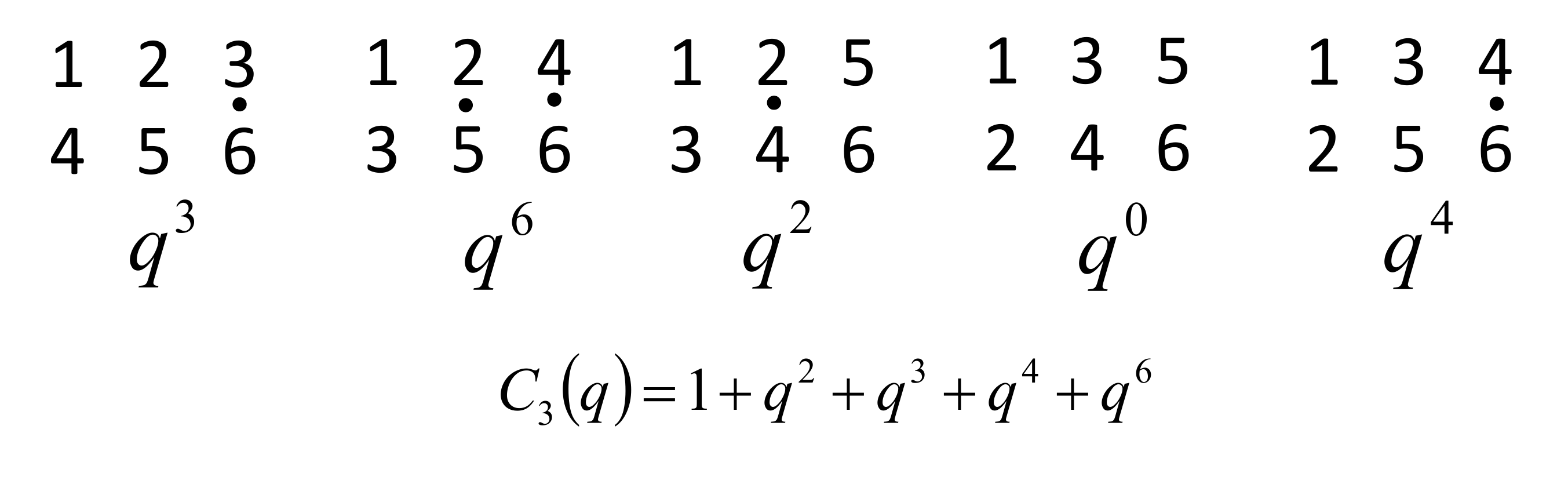}}
\caption{Major index by standard Young tableaux of shape $(n,n)$. Entry $i$ is a descent if $i+1$ appears to the left of $i$.}
\end{figure}

\begin{figure}[!ht]
\centering{
\includegraphics[height=1 in]{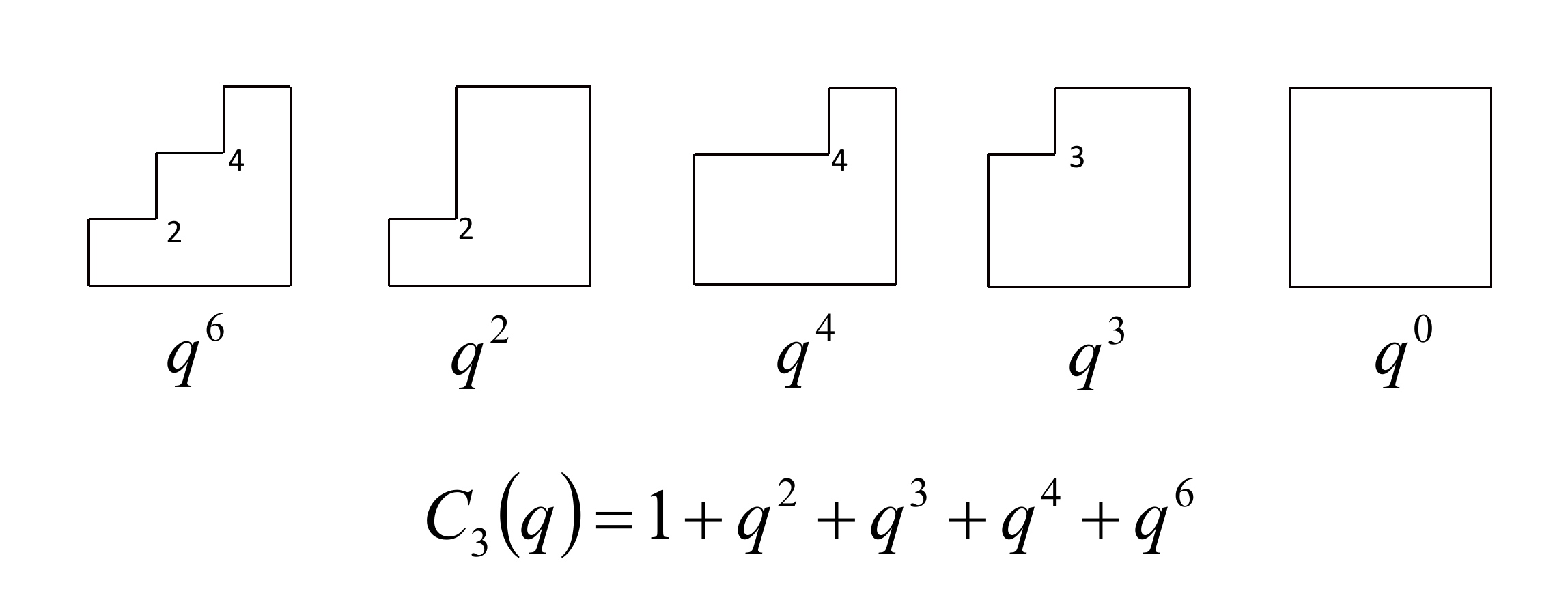}}
\caption{Major index by Dyck paths in $\mathcal{D}_n$. Descents appear at the $EN$ turns.}
\end{figure}

There are also two closely related results. One is the following \cite[p. 523]{ref-8}.
\begin{theo}\label{t-kn-unimodal}
The polynomial $K_n(q)=\frac{1+q}{1+q^n} C_n(q)$  is symmetric and unimodal.
\end{theo}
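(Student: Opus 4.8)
The plan is to first reduce $K_n(q)$ to a closed product form, and then treat symmetry and unimodality separately. For the closed form I would start from $C_n(q)=\frac{1}{[n+1]}\qbinom{2n}{n}$ and apply the $q$-Pascal recurrence $\qbinom{2n}{n}=\qbinom{2n-1}{n-1}+q^{n}\qbinom{2n-1}{n}$ together with $\qbinom{2n-1}{n}=\qbinom{2n-1}{n-1}$ to get $\qbinom{2n}{n}=(1+q^{n})\qbinom{2n-1}{n-1}$. The factor $1+q^{n}$ then cancels the denominator in the definition of $K_n$, giving
\[
K_n(q)=\frac{1+q}{[n+1]}\,\qbinom{2n-1}{n-1}=[2]\prod_{i=2}^{n-1}\frac{[n+i]}{[i]},
\]
the last equality coming from pulling the $i=1$ factor $\frac{[n+1]}{[1]}=[n+1]$ out of the standard expansion $\qbinom{2n-1}{n-1}=\prod_{i=1}^{n-1}\frac{[n+i]}{[i]}$ and cancelling it against $\frac{1}{[n+1]}$. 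This identity (which also exhibits $K_n$ as a polynomial of degree $(n-1)^2$) is the backbone of everything else.

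Symmetry is then immediate: each of $1+q$, $[n+1]$ and $\qbinom{2n-1}{n-1}$ is self-reciprocal, so the polynomial quotient $K_n$ is palindromic; equivalently, by the multiplicativity of the normalization $\mathcal{N}$ recorded in Section \ref{sec-2-pre}, $\mathcal{N}K_n$ is a symmetric Laurent polynomial, i.e. $K_n(q)=q^{(n-1)^2}K_n(q^{-1})$. I would note in passing that the generating-function method of this paper is ill-suited to the unimodality here: via Lemma \ref{lem-1} one checks $\mathcal{N}((q-1)K_n)=\frac{q^{2}-q^{-2}}{q^{n}+q^{-n}}\,\mathcal{N}C_n$ with $C_n=C_{n+1,n}$, so the problem is pinned to the diagonal family $m=n+1$, exactly the direction the constant-term approach cannot reach uniformly.

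Unimodality is the real content. My first attempt would invoke the classical fact (same survey \cite{ref-8}) that a product of symmetric, unimodal polynomials with nonnegative coefficients is again symmetric and unimodal, together with Theorem \ref{G-unimodal}: if the factors of $[2]\prod_{i=2}^{n-1}\frac{[n+i]}{[i]}$ can be regrouped into a product of $q$-integers, blocks $\frac{1-q^{kd}}{1-q^{d}}=1+q^{d}+\cdots+q^{(k-1)d}$, and Gaussian binomials, each manifestly symmetric and unimodal, then I am done. This works cleanly for small $n$; for instance $K_4=[7](1+q^{3})$, $K_5=[7](1+q^{4})(1+q^{3}+q^{6})$, and $K_6=[11](1+q^{3}+q^{6})(1+q^{4})(1+q^{5})$. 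I expect this to be the main obstacle, however: $K_n$ involves an honest division by $[n+1]$, and the naive divisibility matching of numerator to denominator factors breaks down already near $n=7$, where $K_7=\Phi_9\Phi_{10}\Phi_{11}\Phi_{12}\Phi_{13}$ and the cyclotomic factors $\Phi_{10},\Phi_{12}$ have negative coefficients, so no grouping into single unimodal blocks is available.

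When no such factorization presents itself I would fall back on a structural route that is robust. The shape $K_n=\frac{[2]}{[n+1]}\qbinom{2n-1}{n-1}$ — a Gaussian binomial, i.e. the Poincar\'e polynomial of a Grassmannian and hence a hard-Lefschetz Hilbert series, divided by $[n+1]$ — strongly suggests realizing $K_n$, after centering by $q^{-(n-1)^2/2}$, as the graded dimension of the invariants of a finite group action (the natural guess being $\mathbb{Z}/(n+1)\mathbb{Z}$, since averaging over a group of that order is precisely what manufactures the $\frac{1}{[n+1]}$ denominator) on a graded ring carrying a Lefschetz $sl_2$-action. As the Lefschetz operator is equivariant, it restricts to the invariant subspace, and the induced $sl_2$-structure there yields symmetry and unimodality simultaneously; because the ambient cohomology lives in even degrees one obtains full unimodality rather than only the parity unimodality of the character theorem quoted just above the statement. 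The hard part will be pinning down the correct module and action — equivalently, producing a combinatorial model whose rank generating function is $K_n$ and which admits a symmetric chain (order-matching) decomposition — after which the argument is formal.
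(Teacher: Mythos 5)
The paper never proves this theorem: it is quoted verbatim from Stanley's survey \cite[p.~523]{ref-8}, so your proposal has to stand entirely on its own — and it does not, because the unimodality half (which you yourself call ``the real content'') is never established. Your closed form $K_n(q)=\frac{1+q}{[n+1]}\qbinom{2n-1}{n-1}=[2]\prod_{i=2}^{n-1}\frac{[n+i]}{[i]}$, the degree count $(n-1)^2$, and the symmetry argument are all correct (modulo a word on why $K_n$ is a polynomial at all, which you assume silently). But your first unimodality strategy is unsound from the start, not merely ``near $n=7$'': the blocks $1+q^d+\cdots+q^{(k-1)d}$ are \emph{not} unimodal for $d\ge 2$ — e.g.\ $1+q^3$ has coefficient sequence $1,0,0,1$ — so the product-of-symmetric-unimodal-polynomials lemma cannot be applied to $K_4=[7](1+q^3)$ or $K_5=[7](1+q^4)(1+q^3+q^6)$; those identities are true but prove nothing. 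Indeed at $n=4$ no grouping whatsoever works: $K_4=\Phi_2\Phi_6\Phi_7$, and every proper grouping of these three cyclotomic factors contains a bad block ($\Phi_6=q^2-q+1$ has a negative coefficient, $\Phi_2\Phi_6=1+q^3$ is not unimodal, and $\Phi_6\Phi_7$ has coefficient sequence beginning $1,0,1$), while taking the single block $\Phi_2\Phi_6\Phi_7=K_4$ is circular. So the factorization method is valid only for $n\le 3$, and your (correct) computation $K_7=\Phi_9\Phi_{10}\Phi_{11}\Phi_{12}\Phi_{13}$ understates the failure.

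Your fallback strategy — realize $q^{-(n-1)^2/2}K_n(q)$ as the character of an $sl_2$-action restricted to the invariants of a finite group, so that symmetry and unimodality come for free — is indeed the classical route for statements of this type, and is consistent in spirit with the representation-theoretic discussion in Section \ref{sec-5-com}. But as written it is a research program, not a proof: no module, no variety, no group action, and no symmetric chain decomposition is ever exhibited, and your own closing sentence (``the hard part will be pinning down the correct module and action'') concedes exactly this. The guess $\mathbb{Z}/(n+1)\mathbb{Z}$ also needs care: averaging over a group of order $n+1$ produces the scalar $\frac{1}{n+1}$ at the level of dimensions, but upgrading that to the graded factor $\frac{1}{[n+1]}$, together with the extra factor $[2]$, is precisely the nontrivial step, and nothing in the proposal performs it. As it stands you have proved the product identity and the symmetry of $K_n$; unimodality, the substance of the theorem, remains unproven. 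The honest repair is either to carry out the missing construction in full or to do what the paper does and cite \cite[p.~523]{ref-8}.
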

For instance, $K_0(q)=K_1(q)=1, K_2(q)=1+q, K_3(q)=1+q+q^2+q^3+q^4, K_4(q)=1+q+q^2+2q^3+2q^4+2q^5+2q^6+q^7+q^8+q^9.$ This is also a $q$-analogue of the Catalan number
$C_n= \frac{1}{n+1}\binom{2n}{n}$. The degree of $K_n(q)$ is $(n-1)^2$.

It is not hard to see that the unimodality of $K_n(q)$ implies that $(1+q)C_n(q)=(q^n+1)K_n(q)$ is almost unimodal. Indeed, if we let
$K_n(q)=\sum_{i}k_i q^i$ and $(1+q)C_n(q)= \sum_{i} c_i q^i$, then $c_i=k_i+k_{i+n}$.
Consider
\begin{align*}\label{kn-unimodal}
c_{i+1}-c_i&=(k_{i+1}+k_{i+1+n})-(k_{i}+k_{i+n})=(k_{i+1}-k_{i})+(k_{i+1+n}-k_{i+n}).
\end{align*}
Thus the unimodality of $K_n(q)$ (Theorem \ref{t-kn-unimodal}) implies that $c_{i+1}-c_i\ge 0$ for $0<i<\frac{n^2-4n-1}{2}$, while the desired positivity
$c_{i+1}-c_i\geq 0$ is for $1\le i<\frac{n^2-3n-1}{2}$. 

The other one is the following conjecture. See \cite{ref-13}.
\begin{conj}
Write $C_n(q)=\sum_{k} m_n(k)q^k$. The sequence $(m_n(1),m_n(2),\dots, m_n(n(n-1)-1))$ is unimodal when $n$ is sufficiently large. (Seem to hold for $n\ge 16$.)
\end{conj}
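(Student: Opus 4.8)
The plan is to reduce the statement to a monotonicity estimate on the coefficients $m_n(k)$ and to control it by combining the \emph{proven} unimodality of $K_n(q)$ (Theorem \ref{t-kn-unimodal}) with an effective asymptotic analysis of the coefficient sequence. Write $N=n(n-1)=\deg C_n(q)$, so $m_n(k)=m_n(N-k)$ and $N/2$ is an integer. Because the sequence $(m_n(1),\dots,m_n(N-1))$ is symmetric about $N/2$, it suffices to prove that the first differences $\delta(k):=m_n(k)-m_n(k-1)$ are nonnegative for $2\le k\le N/2$; the decreasing half then follows by symmetry, and the interior endpoints $m_n(1)=m_n(N-1)$ cause no trouble.

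First I would exploit the identity $(1+q)C_n(q)=(1+q^n)K_n(q)$. Writing $k_i$ for the coefficients of the symmetric unimodal polynomial $K_n(q)$, of degree $(n-1)^2$ and peak near $P=(n-1)^2/2$, coefficient comparison gives $m_n(k)+m_n(k-1)=k_k+k_{k-n}$, and subtracting two consecutive instances yields
\begin{align*}
\delta(k)+\delta(k+1)=m_n(k+1)-m_n(k-1)=(k_{k+1}-k_k)+(k_{k+1-n}-k_{k-n}).
\end{align*}
Both brackets on the right are nonnegative as soon as $k+1$ and $k+1-n$ do not exceed the peak of $K_n$, so $\delta(k)+\delta(k+1)\ge 0$ throughout the range $2\le k\le P-1$. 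This recovers the parity unimodality of $C_n(q)$ in the bulk directly from Theorem \ref{t-kn-unimodal}, but it is strictly weaker than what we want: a pairwise-nonnegative sequence can still oscillate as $\delta=+,-,+,-,\dots$, and this is exactly the even--odd oscillation that breaks unimodality at small $n$ (compare the interior $0,1,1,2,1,2,1,2,1,1$ of $C_4(q)$). Worse, $N/2=P+(n-1)/2$ sits \emph{inside} the window $P\le k\le P+n-1$ where the two brackets have opposite signs, so the central coefficients are not even pinned down at the parity level by this identity.

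The crux, and the main obstacle, is thus to compare $m_n(k)$ with $m_n(k+1)$ across parities, which the linear relation above cannot do. For this I would bring in an effective local limit theorem for the statistic that generates $C_n(q)$ (e.g.\ $\coarea+\bounce$, equivalently the major index, on $\mathcal{D}_n$): one expects $m_n(k)=g_n(k)\bigl(1+o(1)\bigr)$ for a smooth, strictly log-concave Gaussian profile $g_n$ of width $\sigma_n$, plus a lower-order parity correction. The difficulty is that $\sigma_n$ is large (of order $n^{3/2}$), so the smooth increment $g_n(k)-g_n(k-1)$ is only an $O(1/n)$ fraction of $g_n(k)$ \emph{everywhere} in the bulk; a one-term local limit law therefore does not suffice, and one must produce a genuine two-term expansion that separates the monotone smooth increment from the parity fluctuation and shows the former dominates the latter uniformly in $k$.

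The hardest region is the nearly flat plateau $|k-N/2|=O(\sigma_n^{2/3})$ around the center, where the smooth increment degenerates to zero and the parity fluctuation must itself be shown to vanish faster; establishing that the even- and odd-indexed coefficients agree to leading order there, with error bounds uniform in $k$, is where the real work lies. It is precisely this competition between a vanishing growth rate and a surviving parity oscillation that forces the conclusion to hold only for $n$ large (numerically $n\ge 16$), and makes the effective error control---rather than any additional algebraic identity---the decisive ingredient.
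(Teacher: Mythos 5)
First, a point of context: the statement you were asked to prove is not proved in the paper at all. It is an open conjecture, imported from Chen--Wang--Wang \cite{ref-13}, which the paper merely records and uses conditionally (``if this conjecture is true for $n\ge 16$, then \dots''). So there is no proof of the paper's to compare against; a genuine proof here would be new mathematics, and your proposal should be judged on whether it is one.

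It is not, and the gap is concrete. The portion you actually carry out --- the identity $(1+q)C_n(q)=(1+q^n)K_n(q)$, the coefficient relation $m_n(k)+m_n(k-1)=k_k+k_{k-n}$, and the consequence $\delta(k)+\delta(k+1)=(k_{k+1}-k_k)+(k_{k+1-n}-k_{k-n})\ge 0$ in the bulk --- is correct algebra, but it is essentially the same computation the paper performs right after Theorem~\ref{t-kn-unimodal} to show that $(1+q)C_n(q)$ is ``almost unimodal,'' and, as you yourself observe, it only bounds sums of consecutive differences, so it cannot exclude the $+,-,+,-$ oscillation that is the entire content of the conjecture; it also pins down nothing in the window of width $n$ around the center $N/2$. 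Everything after that is a program, not an argument: no local limit theorem for the major-index statistic on $\mathcal{D}_n$ is proved or even precisely cited, and the ``two-term expansion separating the monotone smooth increment from the parity fluctuation, with error bounds uniform in $k$'' is exactly the open problem restated in analytic language --- you explicitly defer it (``where the real work lies''). Note also that the difficulty is not confined to the central plateau you flag: any Gaussian-type approximation has unbounded \emph{relative} error in the tails, and for small $k$ the coefficients $m_n(k)$ stabilize to partition-type counts (partitions of $k$ with no part equal to $1$), so monotonicity near the edge would require a separate, non-asymptotic argument as well. In short, your proposal proves a weak pairwise inequality already implicit in the paper and correctly diagnoses why it is insufficient, but the decisive steps are stated as goals rather than established, so the conjecture remains open under your approach.
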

If this conjecture is true for $n\ge 16$, then Conjecture \ref{qCmn-Conjecture} is also true because the $n\le 16$ cases are easily verified to be true.

{\small \textbf{Acknowledgements:}

The authors would like to thank Bill Chen and Arthur Yang for helpful discussions.
%

\bibliographystyle{plain}
\bibliography{bibfile}

\begin{thebibliography}{30}



\bibitem{mnCatlanAndrews}
Andrews, George E, The Friedman-Joichi-Stanton monotonicity
conjecture at primes, Unusual Applications of Number Theory
(M.~Nathanson, ed.), DIMACS Ser. Discrete Math. Theor. Comp. Sci.,
  vol.~64, Amer. Math. Soc., Providence, R.I., 2004, pp.~9--15.

\bibitem{OmegaUp}
Andrews, George E. and Paule, Peter and Riese, Axel, MacMahon¡¯s partition analysis. VI. A new reduction algorithm, Ann. Comb.
2001, 5, 251--270, Dedicated to the memory of Gian-Carlo Rota (Tianjin, 1999).

\bibitem{Omega}
Andrews, George E. and Peter Paule and Axel Riese,
MacMahon¡¯s partition analysis: the Omega package, European J. Combin.
2001, 22(7), 887--904.

\bibitem{ref-2}
Andrews, George E., On the difference of successive Gaussian polynomials, Journal of Statistical Planning and Inference, 1993, 34(1),19-22.

\bibitem{Armstrong-mn-core}
Armstrong, Drew and Hanusa, Christopher R. H. and Jones, Brant C., Results and Conjectures on Simultaneous Core Partitions,
European J. Combin., 2014, 41, 205--220.

\bibitem{ref-7}
Bressou, David M., Unimodality of Gaussian polynomials. Discrete Mathematics, 1992, 99(1-3),17-24.

\bibitem{ref-1}
Butler, Lynne M., A unimodality result in the enumeration of subgroups of a finite abelian group, Proceedings of the American Mathematical Society, 1987, 101(4),771-775.


\bibitem{ref-13}
Chen, William Y. C and Wang, Carol J and Wang, Larry X. W, The limiting distribution of the coefficients of the $q$-Catalan numbers, Proceedings of the American Mathematical Society, 2008, 136(11),3759--3767.

\bibitem{ref-14}
Garsia, Adriano and Leven, Emily Sergel and Wallach, Nolan and Xin, Guoce, A new plethystic symmetric function operator and the rational compositional shuffle conjecture at $t=1/q$, Journal of Combinatorial Theory, Series A, 2017, 145, 57-100.

\bibitem{Garsia-Haiman}
Garsia, Adriano M. and Haiman, Mark D., A remarkable $q, t$-Catalan sequence and $q$-Lagrange inversion, J. Algebraic
Combinatorics, 1996, 5, 191--244.

\bibitem{Gosky-Negut}
Gorsky, Eugene and Negut, Andrei, Refined knot invariants and Hilbert schemes, Journal de
Math\'ematiques Pures et Appliqu\'ees, 2015, 104(3), 403--435.

\bibitem{mnCatlanKratt}
Guo, Victor J. W. and Krattenthaler, Christian, Some divisibility properties of binomial and q-binomial coefficients,
   J. Number Theory, 2014, 135, 167--184.

\bibitem{Haiman}
Haiman, Mark D., Conjectures on the quotient ring by diagonal invariants, J. Algebraic Combin., 1994, 3, 17--76.

\bibitem{Hikita}
Hikita, Tatsuyuki, Affine Springer fibers of type A and combinatorics of diagonal coinvariants, Advances
in Mathematics, 2014, 263, 88--122.

\bibitem{Leven}
Leven, Emily, Two special cases of the Rational Shuffle Conjecture, Discrete Mathematics \& Theoretical Computer Science Proceedings, 2014, 01, 789--800.

\bibitem{Loehr-higher-qtCatalan}
Loehr, Nicholas A, Conjectured statistics for the higher $q,t$-Catalan sequences, Electron. J. Combin., 2005, 12
research paper R9; 54 pages (electronic).

\bibitem{ref-5}
Macdonald, G., An elementary proof of a $q$-Binomial identity, in D. Stanton ed., $q$-Series
and Partitions, IMA Vol. Math. Appl. 18 (Springer, New York 1989), 73--75.

\bibitem{Mellit}
Mellit, Anton, Toric braids and $(m, n)$-parking functions, preprint, 2016, arXiv:1604.07456.

\bibitem{ref-4}
O'Hara, Kathleen M, Unimodality of gaussian coefficients: a constructive proof. Journal of Combinatorial Theory, 1990, 53(1), 29--52.

\bibitem{ref-17}
Qiu, Dun and Remmel, Jeffrey B. and Sergel, Emily and Xin, Guoce, On the Schur positivity of $\Delta_{e_2} e_n[X]$, Electron. J. Combin., 2018, 25(4), P4.20.

\bibitem{ref-16}
Qiu, Dun and Remmel, Jeffrey, Schur function expansions and the rational shuffle conjecture,
Proceedings of the $29^{th}$ Conference on Formal Power Series and Algebraic Combinatorics, London (2017).


\bibitem{ref-8}
Stanley, Richard P, Log-concave and unimodal sequences in algebra, combinatorics, and geometry. Annals of the New York Academy of Sciences, 2010, 576(1), 500--535.

\bibitem{ref-12}
Stanley, Richard P, Enumerative Combinatorics, Vol. 2, Cambridge University Press, Cambridge, 1999.

\bibitem{ref-19}
Xin, Guoce, A note on rank complement of rational Dyck paths and conjugation of $(m,n)$-core partitions, J. of Combin., 2017, 8(no.4), 704--726.

\bibitem{Xin-CTEuclid}
Xin, Guoce, A Euclid style algorithm for MacMahon's partition analysis, J. Combin. Theory A., 2015, 131, 32--60.

\bibitem{XinThesis}
Xin, Guoce, The Ring of Malcev-Neumann Series and The Residue Theorem, Ph.D. thesis,
Brandeis University, 2004, arXiv:math.CO/0405133.

\bibitem{Xin-fast}
Xin, Guoce, A fast algorithm for MacMahon's partition analysis, Electron. J. Combin., 2004, 11, R58. (20 pages).

\bibitem{vect-k}
Xin, Guoce and Zhang, Yingrui, On the Sweep Map for $\vec{k}$-Dyck Paths, arXiv:1811.07475.


\bibitem{ref-3}
Zeilberger, Doron. Kathy O'Hara's constructive proof of the unimodality of the gaussian polynomials. American Mathematical Monthly, 1989, 96(7), 590--602.

\bibitem{ref-6}
Zeilberger, Doron. A one-line high school algebra proof of the unimodality of the gaussian polynomials $\qbinom{n}{k}$ for $k < 20$, in Dennis Stanton (ed.) $q$-Series and partitions, IMA Volumes in Mathematics and its Applications, Volume \# 18, Springer-Verlag, New York(1989). pp. 67--72.









\end{thebibliography}
\end{document}